\documentclass[english]{amsart}
\usepackage[T1]{fontenc}
\usepackage[latin9]{inputenc}
\usepackage{color}
\usepackage{babel}
\usepackage{amstext}
\usepackage{amsthm}
\usepackage{amssymb}
\usepackage[pdfusetitle,
 bookmarks=true,bookmarksnumbered=false,bookmarksopen=false,
 breaklinks=false,pdfborder={0 0 0},pdfborderstyle={},backref=false,colorlinks=true]
 {hyperref}
\hypersetup{
 linkcolor=brown, citecolor=blue, pdfstartview={FitH}, hyperfootnotes=false, unicode=true}

\makeatletter
\numberwithin{figure}{section}
\theoremstyle{plain}
\newtheorem{thm}{\protect\theoremname}
\theoremstyle{plain}
\newtheorem{prop}[thm]{\protect\propositionname}
\theoremstyle{definition}
\newtheorem{problem}[thm]{\protect\problemname}
\theoremstyle{plain}
\newtheorem{lem}[thm]{\protect\lemmaname}
\theoremstyle{remark}
\newtheorem{rem}[thm]{\protect\remarkname}
\theoremstyle{plain}
\newtheorem*{thm*}{\protect\theoremname}
\theoremstyle{plain}
\newtheorem*{prop*}{\protect\propositionname}

\usepackage{tikz-cd}
\usepackage{bbm}

\makeatother

\providecommand{\lemmaname}{Lemma}
\providecommand{\problemname}{Problem}
\providecommand{\propositionname}{Proposition}
\providecommand{\remarkname}{Remark}
\providecommand{\theoremname}{Theorem}

\begin{document}
\title{Torsion-freeness problem for matrix groups}
\author{Ruiwen Dong}
\author{Emmanuel Rauzy }
\thanks{The authors are thankful to Giles Gardam, for raising the problem
solved in Theorem \ref{thm:torsion free problem}, and for providing
useful references. }
\maketitle

\section{Introduction}

In \cite{Malcev1940}, Malcev proved that any finitely generated linear
group has a faithful representation over a field which is a purely
transcendental extension of finite degree of the prime field. Since
such a field is always a computable field, it follows that finitely
generated linear groups have solvable word problem. (This was also
stated by Rabin in \cite{Rabin1960}.) In \cite{Malcev1956ResFin}
(translated in \cite{Malcev1983ResFin}), Malcev proved that these
groups are residually finite, and this in fact holds effectively \cite{Detinko2019}
and can be used to perform algorithmic investigations of linear groups. 

Following these seminal results, the study of computational problems for matrix groups has 
become a rich
topic, with positive results \cite{Detinko_Eick_Flannery_2011,DETINKO2013100},
negative results \cite{CASSAIGNE1999,breuillard2024undecidabilityMatrix},
and GAP and MAGMA implementations available for explicit computations. 

$ $

In the present article, we establish some undecidability results about
finitely generated linear groups. 
\begin{thm}
\label{thm:torsion free problem} The torsion-freeness problem is
undecidable for finitely generated subgroups of $\mathrm{SL}_{4}(\mathbb{Z})$. 
\end{thm}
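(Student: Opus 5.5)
We will reduce from a known undecidable problem about $F_2\times F_2$, which embeds in $\mathrm{SL}_4(\mathbb{Z})$ via the block-diagonal embedding $F_2\times F_2\le \mathrm{SL}_2(\mathbb{Z})\times\mathrm{SL}_2(\mathbb{Z})\le\mathrm{SL}_4(\mathbb{Z})$, using a free subgroup of $\mathrm{SL}_2(\mathbb{Z})$ such as the one generated by $\begin{pmatrix}1&2\\0&1\end{pmatrix}$ and $\begin{pmatrix}1&0\\2&1\end{pmatrix}$. The natural source is Mihailova's construction. Take a finitely presented group $Q=\langle a,b\mid r_1,\dots,r_k\rangle$ with undecidable word problem (two generators suffice after a standard embedding). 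Let $\pi:F(a,b)\to Q$ be the quotient map. The Mihailova subgroup $M=\{(u,v)\in F_2\times F_2: \pi(u)=\pi(v)\}$ is generated by $(a,a),(b,b),(r_i,1)$. Membership in $M$ is therefore undecidable.

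The difficulty is that $F_2\times F_2$ is itself torsion-free, so every subgroup of it is torsion-free. We must therefore add torsion in a controlled way. The plan is to enlarge the group by a finite-order element whose interaction with a given input word $w$ creates torsion exactly when $w$ is trivial in $Q$. Given $w\in F(a,b)$, one idea is to consider a subgroup generated by $M$ together with an element $t$ of finite order, where $t$ lives in a larger torsion-containing subgroup of $\mathrm{SL}_4(\mathbb{Z})$. For example, use $\pm I$-type elements, or the order-$4$ element $J=\begin{pmatrix}0&-1\\1&0\end{pmatrix}$ in one block, twisted by $(w,1)$. The group $\langle M,\ (w,1)\cdot\tau\rangle$ would have torsion iff some coset element is of finite order, and we want this to happen iff $(w,1)\in M$.

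A cleaner route: pass to $\mathrm{SL}_2(\mathbb{Z})\times\mathrm{SL}_2(\mathbb{Z})$ with $\mathrm{SL}_2(\mathbb{Z})\cong \mathbb{Z}/4*_{\mathbb{Z}/2}\mathbb{Z}/6$. Use the fact that $\mathrm{PSL}_2(\mathbb{Z})\cong \mathbb{Z}/2*\mathbb{Z}/3$ contains free subgroups whose torsion elements are exactly the conjugates of the two factors. Then, for $H\le G_1\times G_2$ with $G_i$ virtually free, torsion in $H$ means an element $(x,y)$ with both coordinates torsion. Concretely, fix torsion elements $s\in G_1$ of order $2$ and $s'\in G_2$ of order $2$. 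Given $w$, form
\[H_w=\langle M,\ (s\,w,\ s')\rangle,\]
arranging the free factors so that $M$ lies in a torsion-free finite-index normal subgroup $F\times F$ and $s,s'$ act well. Elements of $H_w$ outside $M$ look like $m(sw,s')m'\cdots$. One wants to show that $H_w$ has torsion iff $(sw,s')$ is conjugate into the torsion of the product modulo $M$. This essentially reduces to deciding whether $w$ equals, in $Q$, something of a specific form.

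The main obstacle is the converse direction, that $w\neq 1$ in $Q$ forces $H_w$ torsion-free. I would handle it by choosing $s,s'$ to induce automorphisms of $F$ compatible with $\pi$, namely the same involution $\sigma$ of $Q$ on both sides. Then $H_w$ maps to a semidirect product of the fibre product with $\mathbb{Z}/2$, and torsion in $H_w$ corresponds to solutions of $x\,\sigma(x)\,w'=1$-type equations in $Q$. By choosing $Q$ suitably, for instance with $\sigma$ acting freely in a strong sense, one can make these equations encode exactly $w=1$. In practice, I expect one to need to choose $Q$ with specific extra structure. A Rips-type or Collins–Miller style group with undecidable word problem admitting such an involution is a plausible choice. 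The precise choice of $Q$ and the verification of this bookkeeping is the hard step.
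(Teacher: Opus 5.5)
Your framework is the paper's: a Mihailova subgroup in $F\times F\le\mathrm{SL}_4(\mathbb{Z})$, plus one finite-order element twisted by $(w,1)$. But the proposal stops exactly where the proof has to happen. You never fix $Q$, never determine the structure of $\langle M,g\rangle$, and never prove ``torsion $\iff w=_Q1$''; you call this ``the hard step'' yourself. For general $Q$ and $w$ the equivalence fails:
\begin{itemize}
\item If $w$ is not central in $Q$, neither $(w,1)\tau$ with $\tau$ central nor $(w,1)\varepsilon$ with $\varepsilon$ the factor swap normalizes $M$. Conjugation sends $(u,v)$ to $(wuw^{-1},v)$, resp.\ $(wvw^{-1},u)$, so there is no coset decomposition and no tractable torsion criterion.
\item Your ``cleaner route'' degenerates even for central $w$. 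Since $-I$ is the only element of order $2$ in $\mathrm{SL}_2(\mathbb{Z})$, $s=s'=-I$ is forced and $(sw,s')=(w,1)(-I_4)$. One checks that $\langle M,(w,1)(-I_4)\rangle$ has torsion iff $\bar w$ has finite \emph{odd} order in $Q$: if $w^3=_Q1$, then $((w,1)(-I_4))^3=(w^3,1)(-I_4)$ puts $-I_4$ in the group. This differs from $w=_Q1$ as soon as there are nontrivial central elements of odd order.
\item For the same reason, order-$2$ elements of $\mathrm{SL}_2(\mathbb{Z})$ cannot induce a nontrivial involution $\sigma$. Also $\mathrm{PSL}_2(\mathbb{Z})$ does not embed in $\mathrm{SL}_2(\mathbb{Z})$. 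So the $\sigma$-twisted version is not realized in $\mathrm{SL}_2(\mathbb{Z})\times\mathrm{SL}_2(\mathbb{Z})\le\mathrm{SL}_4(\mathbb{Z})$ as described.
\end{itemize}

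Two ideas are missing.

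\textbf{Central inputs and the swap.} Restrict the inputs to words $w$ central in the finitely presented group $H$, and use the swap $\varepsilon$ (a block permutation matrix of determinant $1$). Then $g_0=(w,1)\varepsilon$ normalizes $M_\pi$ and $g_0^2=(w,w)\in M_\pi$, so $L=\langle M_\pi,g_0\rangle=M_\pi\sqcup M_\pi g_0$. A torsion element must be some $(uw,v)\varepsilon$ with $u=_Hv$. Its square $(uwv,vuw)$ is trivial iff $uwv=1$ in the free group, which forces $w=_Hu^{-2}$. Conversely, $w=_Hb^2$ gives $(b,b^{-1})\varepsilon\in L$ of order $2$. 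So torsion detects whether $w$ is a \emph{square} in $H$, not whether $w=_H1$.

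\textbf{The group-theoretic input.} This is the substantive step: you need a finitely presented group in which the square problem is undecidable for central $w$. The paper takes Kharlampovich's group, whose word problem is already unsolvable on central elements by its $\mathcal{ZN}_3\mathcal{A}$ structure. It then drops the exponent-$2$ relations on the $A_i$, so that the only central square is $1$. A Rips or Collins--Miller group ``with an involution'' gives no control over the centre, which is exactly what the argument needs.

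Your $-I_4$ idea would also work given the same kind of input. You would need a finitely presented group whose centre has unsolvable word problem and no nontrivial elements of odd order. Kharlampovich's $p=2$ group is such a group: its centre lies in the exponent-$2$ subgroup $T$.
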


That is to say, no algorithm can, on input a tuple of matrices in
$\mathrm{SL}_{4}(\mathbb{Z})$, decide whether
the group they generate is torsion-free. (Note that it is decidable whether a group
given in this way is torsion, i.e. finite.)

This result is obtained by using the Mihailova subgroup construction
\cite{Mihailova1968Occurrence} together with Kharlampovich's three
solvable group with unsolvable word problem \cite{Harlampovic1982}. 

The same construction yields:
\begin{thm}
\label{thm:square root problem}There exists a finitely generated
subgroup $G$ of $\mathrm{SL}_{4}(\mathbb{Z})$ with undecidable square
root problem: no algorithm can, on input an element of $G$, decide
whether or not this element is a square.
\end{thm}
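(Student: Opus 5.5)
The plan is to use the Mihailova construction inside $F_2\times F_2\le \mathrm{SL}_4(\mathbb{Z})$, and to reduce the word problem of a suitable finitely presented group to the square root problem by exploiting uniqueness of roots in free groups.

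\textbf{Setup.} First, $\mathrm{SL}_2(\mathbb{Z})$ contains a free subgroup of rank $2$, for instance the one generated by $\begin{pmatrix}1&2\\0&1\end{pmatrix}$ and $\begin{pmatrix}1&0\\2&1\end{pmatrix}$. Taking block diagonal matrices then embeds $F_2\times F_2$ into $\mathrm{SL}_4(\mathbb{Z})$. Since every finitely generated free group embeds in $F_2$, it suffices to work in $F\times F$ for a free group $F=F(X)$ of finite rank.

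Next, fix a finitely presented group $Q=\langle X\mid R\rangle$ with unsolvable word problem; Kharlampovich's group \cite{Harlampovic1982} will do, and in fact any finitely presented example works. Let $s$ be a new letter and set
\[
Q'=Q*\langle s\mid s^2\rangle=\langle X,s\mid R,\ s^2\rangle ,
\]
which is again finitely presented. Put $F=F(X\cup\{s\})$. The Mihailova subgroup is
\[
G=\{(u,v)\in F\times F : u=v \text{ in } Q'\},
\]
which is generated by the finitely many elements $(x,x)$ for $x\in X\cup\{s\}$ and $(1,r)$ for $r\in R\cup\{s^2\}$. Its image in $\mathrm{SL}_4(\mathbb{Z})$ is the group $G$ of the theorem.

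\textbf{Reduction.} Given a word $w$ over $X$, consider
\[
g_w=\bigl((wsw^{-1})^2,\ s^2\bigr)\in F\times F.
\]
Both coordinates are trivial in $Q'$, so $g_w\in G$, and $g_w$ is computable from $w$. I then claim that $g_w$ is a square in $G$ if and only if $w=1$ in $Q$.

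\emph{Roots are unique.} In a free group, square roots are unique when they exist. Hence any $h=(c,d)\in F\times F$ with $h^2=g_w$ must satisfy $c=wsw^{-1}$ and $d=s$. So $g_w$ is a square in $G$ exactly when $(wsw^{-1},s)\in G$, that is, when $wsw^{-1}=s$ in $Q'$.

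\emph{Commuting with $s$.} In the free product $Q*\mathbb{Z}/2$, an element $w\in Q$ commutes with $s$ only if $w=1$. This follows from the normal form theorem: if $w\ne 1$, then $wsw^{-1}s^{-1}$ is a reduced alternating word of length $4$, hence nontrivial. So $wsw^{-1}=s$ in $Q'$ if and only if $w=1$ in $Q$.

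Therefore an algorithm deciding squareness in $G$ would solve the word problem of $Q$, which is impossible.

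\textbf{Main obstacle.} Mihailova's subgroup only gives undecidable \emph{membership}, whereas the question here is squareness of elements that are already in $G$. The step requiring care is producing, uniformly in $w$, an element that always lies in $G$ but whose unique $F\times F$ square root lies in $G$ exactly when $w=1$. Adjoining the involution $s$ achieves this: it makes $(wsw^{-1})^2$ trivial in $Q'$ for every $w$, while keeping the relation $wsw^{-1}=s$ equivalent to $w=1$. Beyond that, one only needs to check uniqueness of roots in free groups and the free product normal form, both of which are standard.
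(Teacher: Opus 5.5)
Your proof is correct, and it gets the undecidability from a different source than the paper does.

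Both arguments use the Mihailova subgroup in $F\times F\le\mathrm{SL}_4(\mathbb{Z})$ and uniqueness of square roots in free groups. This turns squareness of an element of the subgroup into membership of its unique $F\times F$ root.

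The paper feeds in the elements $(1,w^2)$ with $w$ central in the group $H$ of Lemma \ref{lem:squares in kharlampovich}. That group is a modified Kharlampovich group in which every central element has order at most $2$, so $(1,w^2)$ always lies in $M_\pi$, and the word problem stays undecidable on central elements. Establishing this needs three ingredients: the membership of Kharlampovich's group in $\mathcal{ZN}_3\mathcal{A}$, solvability of the word problem for abelian-by-nilpotent groups, and a re-inspection of Kharlampovich's proof with the exponent-$2$ relations on the $A_i$ dropped. The paper only sketches that last step.

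You replace ``a central involution that may or may not be trivial'' with ``two involutions $wsw^{-1}$ and $s$, one per coordinate, that may or may not be equal.'' You manufacture these by adjoining a free factor $\mathbb{Z}/2$ to an arbitrary finitely presented group with unsolvable word problem, and the free product normal form then does the work. Your proof is therefore self-contained and valid for any such group.

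The paper's heavier lemma earns its keep in Theorem \ref{thm:torsion free problem}. There, Lemma \ref{lem:Reduction square problem} needs $w$ central so that conjugation by $g_0$ preserves $M_\pi$. Your $wsw^{-1}$ is not central in $Q'$, whose center is trivial, so your trick does not directly give that theorem.

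A minor point: if inputs are to be words in the Mihailova generators, you can write explicitly $g_w=(w,w)(s,s)^2(1,s^2)^{-1}(w,w)^{-1}(1,s^2)$.
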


Using the more advanced fiber product construction from \cite{Bridson2011},
we show: 
\begin{thm}
\label{thm:no uniform CFQ linear}Finitely generated linear groups
do not have uniformly computable marked finite quotients. 
\end{thm}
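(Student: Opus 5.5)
The plan is to reduce a known undecidable problem about finitely presented groups to the marked finite quotient problem for fiber products sitting inside $F_2\times F_2\le \mathrm{SL}_2(\mathbb{Z})\times\mathrm{SL}_2(\mathbb{Z})\le\mathrm{SL}_4(\mathbb{Z})$. I read ``uniformly computable marked finite quotients'' as follows: there is an algorithm which, given matrices $M_1,\dots,M_k$ generating a linear group $P$ and a finite group $G$ with a tuple $(g_1,\dots,g_k)$, decides whether $M_i\mapsto g_i$ extends to a homomorphism $P\to G$. For a finitely presented group this is trivial, since one just checks the relators. The difficulty must therefore come from the fact that the fiber products are not finitely presented, so their relations are not available.

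\textbf{Construction.} Given a finite presentation $Q=\langle x_1,\dots,x_n\mid r_1,\dots,r_m\rangle$ (or, following \cite{Bridson2011}, a short exact sequence $1\to N\to H\to Q\to 1$ with $H$ a free group or a residually finite hyperbolic group embeddable in $\mathrm{SL}_2(\mathbb{Z})$), I form the fiber product
\[
P_Q=\{(u,v)\in F\times F : u\equiv v \text{ in } Q\}.
\]
It is generated by the diagonal elements $(x_i,x_i)$ and by the elements $(r_j,1)$. I embed $F$ in $\mathrm{SL}_2(\mathbb{Z})$ (Sanov), so these generators become explicit matrices in $\mathrm{SL}_4(\mathbb{Z})$, computed uniformly from the presentation. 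Since the diagonal maps onto $F$, the $(r_j,1)$ normally generate $N\times 1$ in $P_Q$. Moreover $P_Q/(N\times N)\cong Q$, and $P_Q/(N\times 1)\cong F$.

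\textbf{Reduction.} Next I find markings $(g_1,\dots,g_n,h_1,\dots,h_m)$ of finite groups $G$ whose realizability as a marked quotient of $P_Q$ encodes an undecidable property of $Q$. When $h_j=1$, the answer is decidable, since the map factors through $F$. So the markings must send the $(r_j,1)$ to nontrivial elements. Then well-definedness depends on the relations of $P_Q$ that mix $N\times 1$ with the diagonal. The normal closure of the diagonal contains $[N,F]\times 1$, so homomorphisms with $(x_i,x_i)\mapsto 1$ factor through $N/[N,F]$. Combined with the diagonal part, this means a homomorphism with prescribed values on both kinds of generators exists iff a corresponding map from $Q$, or from a central extension of $Q$, onto the finite group exists and is compatible on the $h_j$. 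I will use the Bridson fiber product machinery precisely to arrange that this condition is equivalent to a statement about $Q$ that is undecidable: that a given word is trivial in $Q$, or that $Q$ has a given nontrivial finite quotient. For this I will take $Q$ with trivial $H_2$, or super-perfect, so that the extension term disappears. The input is then a family of $Q$'s for which that property is undecidable, e.g.\ the Bridson--Wilton type families.

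\textbf{Main obstacle.} I expect the main obstacle to be the exact identification in the reduction step: computing which markings of finite groups on the generators $(x_i,x_i),(r_j,1)$ of $P_Q$ define homomorphisms, in terms of $Q$ alone. This requires controlling the relation module $N/[N,N]$ and the group $N/[N,F]$, and choosing $Q$ (super-perfect, with the right finite-quotient behaviour) so that a single, explicitly computable finite marking detects the undecidable property. I would first try the variant with only $N\times 1$ generators marked nontrivially and the diagonal marked trivially, reducing to the finite quotients of $N/[N,F]\cong H_2$-data twisted by $Q$. If that proves too degenerate, I would fall back on the full $1$--$2$--$3$ fiber product construction of \cite{Bridson2011}.
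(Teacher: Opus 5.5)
\textbf{There is a genuine gap.} Your proposal stops exactly where the proof has to happen. The step you call the main obstacle is the whole content of the theorem: which finite group, which markings, and which family of $Q$ detect an undecidable property. The concrete choices you suggest would make it fail.

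First, ``$Q$ has a given nontrivial finite quotient'' is decidable for a finitely presented $Q$: check the finitely many maps against the relators. So it cannot be your undecidable target.

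Second, taking $Q$ super-perfect ``so that the extension term disappears'' is backwards. In any family where triviality of $Q$ is undecidable, infinitely many of the nontrivial $Q$ have no nontrivial finite quotient; otherwise nontriviality would be semi-decidable by enumerating finite quotients. If such a $Q$ also has $H_2(Q,\mathbb{Z})=0$, then by Platonov--Tavgen and Bridson--Grunewald the inclusion $P_Q\hookrightarrow F\times F$ induces an isomorphism $\widehat{P_Q}\cong\widehat{F}\times\widehat{F}$. Hence every homomorphism from $P_Q$ to a finite group $G$ extends to $F\times F$. The realizable markings of $(x_i,x_i),(r_j,1)$ are then exactly the tuples $(\Psi(x_i,x_i),\Psi(r_j,1))$ with $\Psi\in\mathrm{Hom}(F\times F,G)$. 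This set depends only on the words $r_j$ and is the same as for trivial $Q$. So no marking can distinguish these groups from the trivial group, and any reduction along these lines is wrong on them.

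\textbf{What is missing} is an invariant visible to finite quotients that separates the two cases. In the paper this invariant is the first Betti number. Bridson--Wilton give $\Lambda_n\le\mathrm{SL}_{m_n}(\mathbb{Z})$ such that $\{n: b_1(\Lambda_n)=r_n\}$ is c.e.\ but not computable. The abelianization is explicitly known when $b_1(\Lambda_n)=r_n$, and $b_1(\Lambda_n)>r_n$ otherwise. One then tests the finitely many markings of $(\mathbb{Z}/p)^{r_n+1}$.

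\textbf{How to repair your route.} Your own setup already contains the fix, provided you \emph{keep} $H_2(Q)$ rather than kill it. Since $P_Q\cong N\rtimes F$, you have $H_1(P_Q)\cong F^{ab}\oplus N/[N,F]$. Because $F$ is free, the five-term sequence gives $0\to H_2(Q)\to N/[N,F]\to F^{ab}\to Q^{ab}\to 0$.

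Now take the Collins--Miller family used in the paper: each $Q$ is either trivial, or perfect with $H_2(Q)$ infinite, and triviality is undecidable. Then $H_1(P_Q)\cong\mathbb{Z}^{2n}$ if $Q=1$, and $H_1(P_Q)\cong\mathbb{Z}^{2n}\oplus H_2(Q)$ otherwise. So $P_Q$ maps onto $(\mathbb{Z}/2)^{2n+1}$ if and only if $Q\neq 1$. Checking the finitely many markings of that group would therefore decide triviality.

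Done this way, your Mihailova route is lighter than the paper's. It needs no Rips construction and no $1$--$2$--$3$ theorem. It needs no torsion control either, because $H_1(F\times F)$ is torsion-free. It also stays inside $\mathrm{SL}_4(\mathbb{Z})$. As written, though, your proposal does not contain this argument and steers away from it.
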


This means that there is no algorithm which can, on input some $n$
and a tuple $T$ of matrices of $\mathrm{SL}_{n}(\mathbb{Z})$, produce
a list of all finite marked groups that are quotients of the group
generated by $T$. It is known that finitely generated linear groups
are \emph{uniformly effectively residually finite} (see for instance
\cite{Detinko2019}, the name is from \cite{Rauzy2020}): there is
an algorithm which, given a tuple of matrices $T$ that generate a
group $G$, outputs a sequence $(\phi_{n}:G\rightarrow F_{n})_{n\in\mathbb{N}}$
of finite quotients of $G$ such that 
\[
\bigcap_{n\in\mathbb{N}}\ker(\phi_{n})=\{1\}.
\]

Thus there is a line to be drawn between \emph{being able to} \emph{enumerate
sufficiently many finite quotients to separate elements} \emph{of}
$G$, and \emph{being able to enumerate all the finite quotients of}
$G$. 

Finally, we record the following direct consequence of results of
\cite{Rauzy2021}:
\begin{prop}
\label{prop:fp wp linear subgroups}There exists a finitely presented
group $G$ with solvable word problem such that no algorithm can,
on input a tuple of elements of $G$, stop if and only if the group
they generate is linear, nor can an algorithm stop if and only if
the group they generate is not linear. 
\end{prop}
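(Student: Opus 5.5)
The plan is to take the group $G$ directly from \cite{Rauzy2021}. There one builds a finitely presented group with solvable word problem in which the property ``the subgroup generated by this tuple is (or is not) linear'' cannot be semi-decided. I expect that paper to contain a general statement of the following shape. Suppose $\mathcal{P}$ is a property of finitely generated groups, and there are two finitely presented groups such that one has $\mathcal{P}$, the other does not, and they cannot be told apart by certain ``effective'' invariants. Then one can construct a finitely presented group $G$ with solvable word problem in which membership of generated subgroups in $\mathcal{P}$ is neither r.e.\ nor co-r.e. The concrete choice is to use the construction there that embeds an effective family of finitely generated groups $(H_i)$ into a single finitely presented group with solvable word problem. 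This goes via Higman-style embeddings that preserve solvability of the word problem, for instance Clapham's or Boone--Higman-type results. One arranges that $H_i$ is linear exactly when $i$ lies in a set that is neither r.e.\ nor co-r.e.

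The key steps, in order, are as follows.

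\begin{enumerate}
\item Fix a non-recursive set and build a computable family of finitely generated groups, each with uniformly solvable word problem. Each group should be either a linear group such as $\mathbb{Z}$ or a free group, or a non-linear group such as a suitable non-residually-finite group; an example is the Baumslag--Solitar group $BS(2,3)$ or an infinitely generated-like limit. Which case occurs should depend on a $\Sigma_1$ or $\Pi_1$ condition, so that detecting linearity would solve the halting problem in both directions. A convenient device is a sequence of marked groups converging in the space of marked groups. The linear ones are, for example, finite or free groups, while the limit is non-linear, or the other way round. Since linearity is not an open condition, one uses groups that ``become'' non-linear only after a number of steps equal to a halting time, and symmetrically for the complement.
\item Embed all these groups uniformly into one finitely presented group $G$ with solvable word problem, with the generating tuples computable from $i$. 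This is the step that I expect is done in \cite{Rauzy2021}, so I would simply invoke it.
\item Conclude: an algorithm stopping exactly on tuples generating linear (respectively non-linear) subgroups would semi-decide a set that is not semi-decidable.
\end{enumerate}

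The main obstacle is step 1, specifically making linearity fail to be both semi-decidable and co-semi-decidable within a uniformly solvable-word-problem family. For non-linearity, I would use the fact that finitely generated linear groups are residually finite. A group in which a fixed element $w$ is made trivial in all finite quotients, but is nontrivial, is non-linear. One can then switch this on at the halting time: take $BS(2,3)$-like pieces amalgamated with a free group, so the result is non-residually-finite if the machine halts and free otherwise. For the other direction, I would take a non-linear group and quotient it at the halting time to a finite group. The word problem stays solvable because the relation appears at a computable stage and its effect can be checked by running the machine for the length of the word.
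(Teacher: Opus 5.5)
There is a genuine gap in your step 1. Your mechanism is essentially the one the paper uses, but the result it invokes is not about a pair of finitely presented groups. It is \cite[Theorem 9.4]{Rauzy2021}. Its hypothesis is a sequence of marked groups $(G_t)$ with uniformly solvable word problem that \emph{effectively converges} to a marked group $H\notin\{G_t\}$: there is a computable $M$ such that the balls of radius $n$ of $G_t$ and of $H$ coincide for all $t\ge M(n)$. Your steps 2 and 3 are the content of that theorem. The paper simply feeds it two sequences:
\begin{itemize}
\item free Burnside groups $\mathbb{F}_k/\mathbb{F}_k^p$ (non-linear) converging to $\mathbb{F}_k$;
\item finite groups converging to the LEF, non-residually-finite group generated by $n\mapsto n+1$ and $(1\,2)$.
\end{itemize}
So all the real work is in choosing these sequences in your step 1.

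Your justification ``the relation appears at a computable stage and its effect can be checked by running the machine for the length of the word'' is only valid if the group used at halting time $t$ agrees with the limit group on a ball whose radius tends computably to infinity with $t$. Otherwise a fixed short word $w$ can satisfy $w=1$ in $H_i$ if and only if machine $i$ halts. The family then loses its uniformly solvable word problem and cannot be computably embedded in a group with solvable word problem.

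This breaks your second direction, where you quotient a non-linear $H$ onto a finite group at time $t$. Requiring finite \emph{quotients} of $H$ to agree with $H$ on arbitrarily large balls forces $H$ to be residually finite. That is incompatible with the only non-linearity criterion you offer: a witness of non-residual-finiteness dies in every finite quotient, so for such $H$ the construction fails. You need one of two fixes:
\begin{itemize}
\item take $H$ residually finite but non-linear for another reason, e.g.\ the Grigorchuk group, which is infinite, finitely generated and torsion (hence non-linear) and is effectively approximated by its level-$n$ quotients;
\item or, as the paper does, approximate a LEF non-residually-finite group by finite groups that are not its quotients.
\end{itemize}

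Your first direction is fine once made precise in the same way. For instance, take $G_t=\langle a,b\mid b^{-t}a^2b^t=a^3\rangle$, the amalgam of $BS(2,3)$ and $\langle b\rangle$ along $\langle b^t\rangle$. It contains $BS(2,3)$, so it is non-linear. As a one-relator group it has uniformly solvable word problem. By the normal form theorem it kills no nontrivial reduced word of length $<t$, so $G_t\to\mathbb{F}_2$ effectively. This is a valid substitute for the Burnside groups.
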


\subsection{Open problems}

Many questions can be asked about results similar to Theorem \ref{thm:torsion free problem}.
For instance, the following is a well known open problem:
\begin{problem}
[\cite{Detinko2019}, Section 6.2.] Is there an algorithm which,
given a tuple of matrices in $\mathrm{GL}_{n}(\mathbb{Z})$, decides
if the group they generate is free?
\end{problem}

It is natural to ask whether Theorem \ref{thm:no uniform CFQ linear}
can be strengthened to impossibility even of a non-uniform algorithm: 
\begin{problem}
Is there a finitely generated linear group with non-computable marked
finite quotients? 
\end{problem}

Finally, we ask to extend Proposition \ref{prop:fp wp linear subgroups}
to other types of descriptions of groups: 
\begin{problem}
Can there be an algorithm that stops exactly on finite presentations
of linear groups? 
\end{problem}

By the Adian-Rabin Theorem, there cannot be an algorithm that stops
exactly on finite presentations of non-linear groups. 

\section{\label{sec:Detecting-squares-in}Detecting squares in Kharlampovich's
three solvable group}

The following lemma is the key ingredient in proving Theorem \ref{thm:torsion free problem}
and Theorem \ref{thm:square root problem}.
\begin{lem}
\label{lem:squares in kharlampovich}There exists a finitely presented
group $H$ such that the following problem is undecidable:
\begin{itemize}
\item Input: an element $w$ central in $H$,
\item Output: is $w$ a square in $H$?
\end{itemize}
In fact, we can suppose that the only square that belongs to the center
of $H$ is the identity element.
\end{lem}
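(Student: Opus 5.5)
The plan is to take $H$ to be (a variant of) Kharlampovich's finitely presented three-step solvable group with unsolvable word problem \cite{Harlampovic1982}, built so that its undecidability lives in the center. I will use the extra clause of the lemma as the mechanism. If $H$ has the property that the only square in $Z(H)$ is the identity, then a central element $w$ is a square in $H$ if and only if $w=1$. So it suffices to produce a finitely presented $H$ and a computable sequence of words $w_n$, each representing a central element, such that whether $w_n=_H 1$ is undecidable and $Z(H)\cap\{h^2: h\in H\}=\{1\}$.

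First I recall the structure of Kharlampovich's construction, in the form later streamlined by Baumslag--Gildenhuys--Strebel. It encodes a Minsky machine $M$ with non-recursive halting set. The group $K_p$ is a finitely presented metabelian-by-abelian group built over $\mathbb{Z}/p$ for a chosen prime $p$. It contains a distinguished element $z$ and a computable family of conjugates or commutator words $u_n$ with two properties: $u_n$ equals $1$ if and only if $M$ halts on input $n$, and otherwise $u_n$ equals a fixed nontrivial central element $z$ of order $p$. I will take $p=2$, and I will check from the relations that all $u_n$ lie in a central subgroup $\langle z\rangle\cong\mathbb{Z}/2$. This gives the first part: the central elements $u_n$ have undecidable triviality.

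Second, I need to kill central square roots. The danger is that $z=h^2$ for some $h$; this would make every $u_n$ a square and destroy the reduction. I would handle this either by checking directly in Kharlampovich's normal forms that $z$ is not a square, or, more robustly, by modifying the group. One option is $H=K\times\mathbb{Z}$, using $z$ as before. Another is to pass to a suitable finite-index subgroup containing $z$ and all $u_n$, but not containing any $h$ with $h^2\in Z$ nontrivial. The approach I would try first is to exploit the abelianization. Take the homomorphism $\varphi:K\to\mathbb{Z}^k$ onto the top abelian layer. An element $h$ with $h^2$ central and nontrivial must satisfy $\varphi(h)=0$, because the action of the top layer on the lower layers is faithful away from $0$. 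So $h$ lies in the metabelian part $K'$, which is an $\mathbb{F}_2$-module extended by an abelian group of exponent $2$ (or torsion-free) in a controlled way. In that part, I would compute squares explicitly and show that they avoid $z$. I would also verify that $Z(K)$ itself has no other nontrivial squares.

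The main obstacle is exactly this second step, namely controlling which elements square into the center. Kharlampovich's group is given only by a fairly intricate presentation, so one must extract from it enough normal-form information to rule out $h^2=z$. If the direct verification proves too messy, the fallback is a general trick: replace $K$ by $H=K\times\mathbb{Z}/4$ with generator $a$, and use the inputs $w_n=u_n a^2$. One then arranges the analysis so that $w_n$ is a square precisely when $u_n$ is trivial, and redefines the center-squares condition accordingly. Once both points are established, the reduction from the halting problem of $M$ is immediate.
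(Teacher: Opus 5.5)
Reducing the lemma to the claim that $1$ is the only central square is the right frame, and it matches the paper. But the step that carries all the content, arranging $Z(H)\cap\{h^2: h\in H\}=\{1\}$, is not achieved by any of your routes. In Kharlampovich's group with $p=2$, one has $G=T\rtimes(H_1^{H_2}\rtimes H_2)$. Here $T$ and $H_1^{H_2}=\langle A_i\rangle^{H_2}$ are both abelian of exponent $2$, and $Z(G)\le T$. For $t\in T$ and $h_1\in H_1^{H_2}$ one gets $(th_1)^2=th_1t^{-1}h_1^{-1}=[t,h_1]$, and many central elements arise this way. So your explicit computation in the ``$\mathbb{F}_2$-module extended by an abelian group of exponent $2$'' layer would produce nontrivial central squares rather than rule them out.

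The direct-product tricks do nothing. In $K\times C$, the element $(u,c)$ is a square iff $u$ is a square in $K$ and $c$ is a square in $C$. So $(z,0)\in K\times\mathbb{Z}$ and $u_na^2\in K\times\mathbb{Z}/4$ are squares iff $z$, resp.\ $u_n$, is a square in $K$, which is the original question. Moreover $(1,2)$, resp.\ $(1,a^2)$, is a nontrivial central square, so the extra clause of the lemma fails outright. The finite-index-subgroup option comes with no reason why such a subgroup should exist, contain the $u_n$, and keep them central.

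The missing idea is to change the presentation: do not impose the relations (G1) that make the $A_i$ of order $2$. Then $H_1^{H_2}$ becomes torsion-free abelian, while $T$ is still of exponent $2$ and still contains the center. Since $H_1^{H_2}\rtimes H_2$ is torsion-free, $g^2\in T$ forces $g\in T$, and hence $g^2=1$. You then have to check that (G1) is used in Kharlampovich's argument only to land in $\mathcal{A}_2^2\mathcal{A}$ rather than $\mathcal{A}^3$, and not for the undecidability.

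Separately, your Step 1 assertion that the witnesses satisfy $u_n\in\{1,z\}$ for one fixed central involution $z$ is not something the construction provides. The paper gets undecidability on central elements structurally instead. It uses $G\in\mathcal{Z}(\mathcal{N}_3\mathcal{A})$ together with solvability of the word problem in the corresponding quotient, which forces the undecidable instances of the word problem to be central. No distinguished $z$ is needed.
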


Mark Sapir writes the following on Mathoverflow \cite{Sapir2018Square}: 

\medskip

``There exists a group $G$ given by a finite set of generators $X$
and a finite set of relations $R$ for which there is no algorithm
to decide, given an element $g\in G$, if $g$ is a square in $G$.
One example of such a group is Kharlampovich's group from \cite{Harlampovic1982}
(one needs to take $p=2$ in her construction). 

Indeed, if an element belongs to the center of Kharlampovich's group
with $p=2$, then it is a square in the group if and only if it is
equal to $1$, and Kharlampovich proved that this property is undecidable.''

\medskip

However, the above claim is false (many central elements are squares,
this will appear clearly below), but true up to a small modification
of the construction. 
\begin{proof}
We refer ourselves throughout to the notations of \cite{Kharlampovich2017},
which gives an updated proof of \cite{Harlampovic1982}. 

The first part is to notice that indeed, in Kharlampovich's group,
the word problem is unsolvable even when restricting our attention
to central elements. This is because not only is the group $3$-solvable
(belonging to $\mathcal{A}^{3}$), but by \cite[Theorem 4.3]{Kharlampovich2017}
it also belongs to the group variety $\mathcal{ZN}_{3}\mathcal{A}$
of abelian-by-three-nilpotent-by-central groups\footnote{Note that even though the variety product is associative, it is not
so when the notation $\mathcal{Z}$ is involved. In particular here
$G\in\mathcal{Z}(\mathcal{N}_{3}\mathcal{A})$, but $G\notin(\mathcal{Z}\mathcal{N}_{3})\mathcal{A}=\mathcal{N}_{4}\mathcal{A}$}. Note that abelian-by-nilpotent groups have solvable word problem
\cite{Miller1992}. And thus the fact that Kharlampovich's group has
unsolvable word problem implies that no algorithm can, given a central
element, decide whether or not that element is the identity element. 

We now explain why Sapir's claim that central elements are not squares
is false but true up to a small modification. 

The group $G$ is associated to a Minsky machine $M$.

A configuration of $M$ is of the form
\[
q_{i}a_{1}^{n_{1}}...a_{k}^{n_{k}},\,n_{j}\in\mathbb{N},
\]
where $q_{i}$ is the state of the machine and the $a_{j}$ are its
glasses, the $j$-th glass containing exactly $n_{j}$ coins. Such
a configuration is associated a group element 
\[
x(q_{i})*a_{1}^{(n_{1})}*...*a_{k}^{(n_{k})}*A_{1}*...*A_{k},
\]
Here $x(q_{i})$ is an element that represents the state $q_{i}$,
the element $a_{j}$ represents a coin in the $j$-th glass, $A_{1}$,...,
$A_{k}$ are group elements that represent the bottom of the glasses
of the Minsky machine -thus indicating when such a glass is empty-
and $*$ is an operation obtained by applying products and conjugates
and whose exact definition we will not need.

In the construction given in \cite{Harlampovic1982,Kharlampovich2017},
relations are added so that the elements $x(q_{i})$ and $A_{i}$
have order $p$ for a chosen prime $p$. We are interested in the
case $p=2$, which is the only case where we can hope non-trivial
central elements not to be squares. 

Kharlampovich's group $G$ is a semi-direct product 
\[
T\rtimes(H_{1}^{H_{2}}\rtimes H_{2}),
\]
where $T=\langle x(q_{i})\rangle^{G}$, $H_{1}^{H_{2}}=\langle A_{i}\rangle^{H_{2}}$
and $H_{2}=\langle a_{i}\rangle$ (see \cite[p. 337]{Kharlampovich2017}).
These three groups are abelian, and $T$ and $H_{1}^{H_{2}}$ have
exponent $p=2$. The center of $G$ is contained in $T$.

It is not true that non-trivial central elements are not squares.
Indeed, for $t\in T$ and $h_{1}$ in $H_{1}$ we have 
\[
(th_{1})^{2}=th_{1}th_{1}=th_{1}t^{-1}h_{1}^{-1}=[t,h_{1}],
\]
 because $h_{1}$ and $t$ have order $2$. Many central elements
can be obtained in this way. 

However, if we modify Kharlampovich's construction in that we do not
impose that the elements $A_{1}$,... , $A_{k}$ have order $2$,
all results remain true and the following equivalence will hold:
\[
\text{For \ensuremath{a} central, (\ensuremath{a=1\iff a} is a square)}.
\]
Indeed, $G$ remains a semi-direct product 
\[
T\rtimes(H_{1}^{H_{2}}\rtimes H_{2}),
\]
the group $T$ is still abelian of exponent $2$, but now $H_{1}^{H_{2}}$
is abelian and torsion free, instead of abelian of exponent $2$.
An element of $G$ is a triple $(t,h_{1},h_{2})$, $t\in T$, $h_{1}\in H_{1}^{H_{2}}$
and $h_{2}\in H_{2}$. Then 
\[
(t,h_{1},h_{2})^{2}\in T\implies h_{2}^{2}=1\text{ because }H_{2}\text{ is torsion free}
\]
\[
(t,h_{1},1)^{2}\in T\implies h_{1}^{2}=1\text{ because }H_{1}\text{ is torsion free}
\]
\[
(t,1,1)^{2}=1\text{ because }T\text{ has exponent }2
\]
And thus the only element of $T$ that is a square is now the neutral
element. 

It is easy to check that the relations that impose $H_{1}^{H_{2}}$
to have exponent $2$ are not important in the ``undecidability of
the word problem'' part of the result. They only serve in improving
the result: a finitely presented group with unsolvable word problem
is constructed in the variety $\mathcal{A}_{2}^{2}\mathcal{A}$ of
abelian-by-abelian-of-exponent-$2$-by-abelian-of-exponent-$2$, this
is a better result than only constructing a 3-solvable group with
unsolvable word problem. 

To check the above claim, one only needs to look at the proof given
in \cite[pp. 333-343]{Kharlampovich2017}, and to see that the relations
that impose that $H_{1}$ has exponent $2$ (the relations (G1) in
the notations of \cite{Kharlampovich2017}) are only ever used in
order to prove that $G$ will belong to $\mathcal{A}_{2}^{2}\mathcal{A}$
instead of $\mathcal{A}^{3}$, this is easy enough to do. 
\end{proof}
\begin{rem}
It is easy to build a non-finitely generated abelian group with solvable
word problem where the problem of determining if a given element is
a square is undecidable. 

Take $H_{0}=\bigoplus_{i\in\mathbb{N}}\mathbb{Z}$ with $a_{i}$ as
generator of the $i$-th copy of $\mathbb{Z}$. Consider $\varphi_{i}$
a standard enumeration of all Turing machines, $(p_{i})$ the sequence
of primes, ands add the following relations:

Whenever $\varphi_{n}(n)$ halts after $t$ computation steps, add
the relation (in additive notation):
\[
a_{p_{i}}=2a_{p_{i}^{t}}
\]
to the group $H_{0}$. 

The resulting group $H_{1}$ is still isomorphic to $\bigoplus_{i\in\mathbb{N}}\mathbb{Z}$,
but the generators $(a_{i})_{i\in\mathbb{N}}$ are not a free generating
family anymore. The group $H_{1}$ still has solvable word problem
with respect to the generating family $(a_{i})_{i\in\mathbb{N}}$.
However, the problem of deciding if a given element is a square is
undecidable: $a_{p_{i}}$ is a square if and only if the $i$th Turing
Machine halts on input its own name, which is undecidable.
\begin{problem}
By Houcine's version of the Higman Embedding Theorem \cite{OuldHoucine2007},
it is possible to embed $H_{1}$ into a finitely presented group with
solvable word problem whose center will be exactly $H_{1}$. Can this
be done while preserving undecidability of the square problem for
the center? 
\end{problem}

\end{rem}

\section{\label{sec:Undecidable-square-root}Undecidable square root problem
in a linear group}

We will use the Mihailova subgroup. Let $H$ be a finitely presented
group with presentation $\pi=\langle a_{1},...,a_{k}\vert\,r_{1},...,r_{n}\rangle$.
The Mihailova subgroup associated to $\pi$ is the finitely generated
subgroup $M_{\pi}$ of $\mathbb{F}_{k}\times\mathbb{F}_{k}$ generated
by the following $(k+n)$ elements
\[
(a_{i},a_{i}),i=1..k
\]
\[
(1,r_{i}),i=1..n.
\]
It is well known that this subgroup can be characterized as
\[
(u,v)\in M_{\pi}\iff u=_{H}v.
\]

\begin{proof}
[Proof of Theorem \ref{thm:square root problem}] Note that the
Mihailova subgroup $M_{\pi}$ is indeed a subgroup of $\mathrm{SL}_{4}(\mathbb{Z})$
(independently of the presentation $\pi$). 

Let $\pi=\langle a_{1},...,a_{k}\vert\,r_{1},...,r_{n}\rangle$ be
a finite presentation of the group $H$ given in Lemma \ref{lem:squares in kharlampovich},
and consider the associated Mihailova subgroup $M_{\pi}$.

For each word $w$ over $\{a_{1},...,a_{k}\}^{\pm1}$ that defines
an element of the center of $H$, the element $(1,w^{2})$ belongs
to $M_{\pi}$, since $w^{2}=_{H}1$. 

Because free groups have the unique root property, the set of square
roots of $(1,w^{2})$ in $\mathbb{F}_{k}\times\mathbb{F}_{k}$ consists
in the singleton $\{(1,w)\}$. 

Thus $(1,w^{2})$ is a square in $M_{\pi}$ if and only if $(1,w)$
belongs to $M_{\pi}$, if and only if $w=_{H}1$, which is undecidable. 
\end{proof}

\section{\label{sec:Torsion-freeness-problem-for}Torsion-freeness problem
for finitely generated subgroups of $\mathrm{SL}_{4}(\mathbb{Z})$}
\begin{lem}
\label{lem:Reduction square problem}The following problem reduces
to the torsion-freeness problem for subgroups of $\mathrm{SL}_{4}(\mathbb{Z})$:
given a finite presentation $\pi$ for a group $H$ and an element
$w$ central in $H$, decide whether or not $w$ is a square in $H$. 
\end{lem}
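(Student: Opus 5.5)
The plan is to turn the question ``is the central element $w$ a square in $H$?'' into a question about involutions in an auxiliary finitely presented group. That group is built explicitly from $\pi$ and $w$. Then I would try to realise those involutions as torsion in a finitely generated subgroup of $\mathrm{SL}_{4}(\mathbb{Z})$, in the style of the Mihailova construction of Section \ref{sec:Undecidable-square-root}. The first step is fully algebraic; the second step I have not managed to close, and that is the real gap.

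\textbf{Step 1 (adjoining a square root of $w$).} From $\pi=\langle a_{1},\dots,a_{k}\mid r_{1},\dots,r_{n}\rangle$ and $w$, form
\[
\pi''=\langle a_{1},\dots,a_{k},s\mid r_{1},\dots,r_{n},\ [s,a_{1}],\dots,[s,a_{k}],\ s^{2}w^{-1}\rangle ,
\]
presenting $H''=(H\times\mathbb{Z})/\langle (w,-2)\rangle$. Since $w$ is central, $s$ is central in $H''$ and $H$ embeds in $H''$. The parity map $s\mapsto 1$, $a_{i}\mapsto 0$ gives a well-defined surjection $\theta: H''\to\mathbb{Z}/2$, because every relator has even $s$-exponent. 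Its kernel is $H$, so $H''=H\sqcup sH$. I would then check the following equivalence.
\begin{itemize}
\item If $w=_{H}u^{2}$, then $(su^{-1})^{2}=s^{2}u^{-2}=wu^{-2}=1$. Moreover $su^{-1}\neq 1$ because $\theta(su^{-1})=1$, so $su^{-1}$ is an involution in $sH$.
\item Conversely, suppose $(sh)^{2}=1$ with $h\in H$. Since $s$ is central, $s^{2}h^{2}=wh^{2}=1$, so $w=(h^{-1})^{2}$ is a square.
\end{itemize}
Hence $w$ is a square in $H$ if and only if the coset $sH=\theta^{-1}(1)$ contains an element of order $2$. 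Note that $\pi''$ is computable from $(\pi,w)$.

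\textbf{Step 2 (realising these involutions in $\mathrm{SL}_{4}(\mathbb{Z})$).} I would work inside $\mathrm{SL}_{2}(\mathbb{Z})\times\mathrm{SL}_{2}(\mathbb{Z})\subset\mathrm{SL}_{4}(\mathbb{Z})$, using a torsion-free free subgroup $\Gamma\cong\mathbb{F}_{k+1}$ of $\mathrm{SL}_{2}(\mathbb{Z})$ together with torsion elements of $\mathrm{SL}_{2}(\mathbb{Z})$, such as $-I$ or the order-$4$ element $S$. The aim is a subgroup $G(\pi,w)$, generated by Mihailova-type generators for $\pi''$ plus some extra generators, such that:
\begin{itemize}
\item every torsion element of $G(\pi,w)$ has trivial first coordinate, which is automatic because $\Gamma$ is torsion-free;
\item $G(\pi,w)$ has torsion if and only if $\theta^{-1}(1)$ contains an involution.
\end{itemize}
Lemma \ref{lem:Reduction square problem} would then follow: from $(\pi,w)$ compute $\pi''$ and the matrices generating $G(\pi,w)$, and ask the torsion-freeness oracle.

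\textbf{Main obstacle: the naive choices fail.} The first attempt would be the sign-twisted fibre product
\[
G=\{(u,\varepsilon(u)v):u=_{H''}v\}\subset\Gamma\times(\Gamma\times\{\pm I\}),
\]
where $\varepsilon$ is the $s$-parity. This does not work. Its only torsion candidate is $(1,-I)$, which would require a word trivial in $H''$ with odd $s$-exponent, and that never happens because $\theta$ is well defined. So this $G$ is always torsion-free. Adding a single generator $(1,-w)$ to $M_{\pi}$ instead detects only odd-order triviality of $w$, not squareness. The issue is that the involutions $su^{-1}$ live in $H''$ with nontrivial image in $\mathbb{F}_{k+1}$, whereas torsion in a subgroup of $\Gamma\times\mathrm{SL}_{2}(\mathbb{Z})$ must have trivial first coordinate.

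\textbf{What I would try next.} I would send $s$ to $S$ in the second factor, inside a subgroup $L=\langle\Gamma',S\rangle$ that decomposes as an amalgam over $\langle -I\rangle$, so that $S^{2}=-I$. I would then analyse the torsion of the normal subgroup $N\le L$, the image of the normal closure of the relators of $\pi''$. One has $G\cap(\{1\}\times L)=\{1\}\times N$, and torsion in $N$ would have to come from conjugates of $S$ or $-I$. The hope is that such torsion appears exactly when $\theta^{-1}(1)$ contains an involution. Checking this, via the normal form in the amalgam together with Step 1, is the step I do not have. If it fails, I would look for a different ambient linear group in which the torsion of a fibre product over $H''$ is visible in the kernel coordinate.
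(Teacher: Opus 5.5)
Your Step 1 is correct, but it only reformulates the problem. The lemma needs a computable map sending $(\pi,w)$ to finitely many matrices in $\mathrm{SL}_4(\mathbb{Z})$ whose generated group has torsion if and only if $w$ is a square in $H$. Step 2, which should supply this, is never carried out, so as it stands the proposal does not prove the lemma. Moreover, the framework you search in is structurally blocked, and the property you list as desirable is exactly what blocks it. In any subgroup $G$ of $\Gamma\times\mathrm{SL}_2(\mathbb{Z})$ (or of $\Gamma\times L$) with $\Gamma$ torsion-free, torsion has trivial first coordinate. As long as the map to $H''$ is read off from the first coordinate, as in any Mihailova-type construction, every torsion element of $G$ maps to $1$ in $H''$. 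So torsion in $G$ can never lift the involutions $su^{-1}\in sH$ you found in Step 1. You offer no other mechanism: the amalgam $\langle\Gamma',S\rangle$ only relocates the question to $\{1\}\times N$, where nothing is proved.

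Your closing sentence points the right way. You need an ambient linear group whose torsion elements have nontrivial coordinates. The paper gets this from an automorphism that swaps the two factors, rather than from torsion inside a factor. Take $\mathbb{F}_k\wr\mathbb{Z}/2=(\mathbb{F}_k\times\mathbb{F}_k)\rtimes\langle\varepsilon\rangle$, with $\varepsilon$ exchanging coordinates. It embeds in $\mathrm{SL}_4(\mathbb{Z})$ via two diagonal $\mathrm{SL}_2(\mathbb{Z})$ blocks and the block-swap permutation matrix, which has determinant $+1$. Since $((a,b)\varepsilon)^2=(ab,ba)$, its nontrivial torsion elements are exactly the involutions $(b,b^{-1})\varepsilon$ with $b$ arbitrary. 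Realize your $s$ as $g_0=(w,1)\varepsilon$ and put $L_{\pi,w}=\langle M_\pi,g_0\rangle$. Then $g_0^2=(w,w)\in M_\pi$, and since $w$ is central, $g_0$ normalizes $M_\pi$; hence $L_{\pi,w}=M_\pi\sqcup M_\pi g_0$. In fact $(u,v)\mapsto u$ and $(u,v)g_0\mapsto us$ (reading $u$ in $H\le H''$) is a surjection $L_{\pi,w}\to H''$ with torsion-free kernel $K\times K$, where $K=\langle\!\langle r_1,\dots,r_n\rangle\!\rangle\le\mathbb{F}_k$. So this is precisely the linear model of your $H''$. If $w=_{H}b^2$, then $(bw^{-1},b^{-1})\in M_\pi$, and $(bw^{-1},b^{-1})g_0=(b,b^{-1})\varepsilon$ is an involution lying over your $sb^{-1}$. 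Conversely, $M_\pi$ is torsion-free. For $(u,v)\in M_\pi$, the element $(u,v)g_0=(uw,v)\varepsilon$ has finite order if and only if $(uwv,vuw)=1$, i.e.\ if and only if $w=u^{-1}v^{-1}$ in $\mathbb{F}_k$. That gives $w=_{H}u^{-2}$, because $u=_{H}v$.
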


\begin{proof}
As in Section \ref{sec:Undecidable-square-root}, we let $M_{\pi}$
be the Mihailova subgroup associated to a finite presentation $\pi=\langle a_{1},...,a_{k}\vert\,r_{1},...,r_{n}\rangle$. 

We will now consider subgroups of the semi-direct product $M_{\pi}\rtimes\mathbb{Z}/2$,
where $\mathbb{Z}/2$ acts by permuting factors. Note that this semi-direct
product can be realized as a subgroup of $\mathrm{SL}_{4}(\mathbb{Z})$. 

Given $w\in\mathbb{F}_{k}$ that defines a central element of $H$,
consider the group $L_{\pi,w}$ generated by the Mihailova generators
together with the element $g_{0}=(w,1)\varepsilon$, where $\varepsilon$
is the non-trivial element of $\mathbb{Z}/2$. Thus 
\[
L_{\pi,w}=\langle M_{\pi},g_{0}\rangle.
\]

We will show that $L_{\pi,w}$ contains an element of finite order
if and only if $w$ is a square in $H$. 

Suppose first that $w$ is a square in $H$. Take $b$ such that $w=_{H}b^{2}$.
Then $(bw^{-1},b^{-1})\in M_{\pi}$, because $bw^{-1}=_{H}bb^{-2}=_{H}b^{-1}$.
And thus $(bw^{-1},b^{-1})g_{0}=(b,b^{-1})\varepsilon\in L_{\pi,w}$,
and thus $L_{\pi,w}$ has torsion. 

We prove the converse implication. 

Note that $g_{0}^{2}=((w,1)\varepsilon)^{2}=(w,w)\in M_{\pi}$. Thus
an element of $L_{\pi,w}$ can be written as a product of elements
of $M_{\pi}$ and of $g_{0}^{\pm1}$ with no two consecutive occurrences
of $g_{0}^{\pm1}$. 

Note that $g_{0}(u,v)g_{0}=(wv,uw)$. If $(u,v)\in M_{\pi}$, then
$u=_{H}v$, and thus also $wu=_{H}vw$ because $w$ is central in
$H$, and so $g_{0}(u,v)g_{0}\in M_{\pi}$. 

The same goes for the products:
\[
g_{0}^{-1}(u,v)g_{0}=(v,w^{-1}uw)
\]
\[
g_{0}(u,v)g_{0}^{-1}=(wvw^{-1},w)
\]
\[
g_{0}^{-1}(u,v)g_{0}^{-1}=(vw^{-1},w^{-1}u)
\]
Thus any product of elements of elements of $M_{\pi}$ and of $g_{0}^{\pm1}$
that contains an even number of occurrences of $g_{0}^{\pm1}$ in
fact belongs to $M_{\pi}$. 

Thus any element of $L_{\pi,w}$ either belongs to $M_{\pi}$ or it
can be written as $(u_{1},v_{1})g_{0}^{\pm1}(u_{2},v_{2})$ for $(u_{1},v_{1}),(u_{2},v_{2})\in M_{\pi}$. 

Suppose that $L_{\pi,w}$ contains an element of finite order. Such
an element cannot belong to $M_{\pi}$, which is torsion-free. Thus,
up to conjugating and taking an inverse, $L_{\pi,w}$ must contain
an element of finite order of the form 
\[
(u,v)g_{0}=(uw,v)\varepsilon
\]
for some $(u,v)\in M_{\pi}$. This element has finite order if and
only if it has order exactly $2$ if and only if 
\begin{align*}
((uw,v)\varepsilon)^{2}=1 & \iff(uwv,vuw)=1\\
 & \iff uwv=1\\
 & \iff w=u^{-1}v^{-1}\\
 & \implies w=_{H}u^{-2}\text{ because }u=_{H}v
\end{align*}
Thus if there is a torsion element in $L_{\pi,w}$, $w$ is a square
in $H$. 
\end{proof}
Theorem \ref{thm:torsion free problem} follows. 
\begin{proof}
[Proof of Theorem \ref{thm:torsion free problem}] This is simply
Lemma \ref{lem:squares in kharlampovich} together with Lemma \ref{lem:Reduction square problem}.
\end{proof}

\section{Uniform computability of finite quotients}

A group $G$ with generating family $S$ has \emph{computable marked
finite quotients }if there is an algorithm which, on input a finite
group $F$ and a function $f$ from $S$ to $F$, decides whether
or not $f$ can be extended to a group homomorphism. 

This property is in fact independent of the choice of $S$ \cite{Rauzy_2021}. 

When $G$ is recursively presentable, this property is equivalent
to the existence of a computable enumeration of all its marked finite
quotients \cite{Rauzy_2021}. (In this enumeration, a finite quotient
is described by the Cayley table of the finite group together with
a function from the generators of $G$ to generators of the finite
group.)

A family $\mathcal{C}$ of groups has \emph{uniformly computable finite
quotients} if there is an algorithm which, given a group $G$ in $\mathcal{C}$
and a function from the generators of $G$ to a finite group, decides
if the function defines a group homomorphism\footnote{This definition in fact depends on both the family $\mathcal{C}$
and of a chosen way to describe the groups in $\mathcal{C}$ with
finite data. Here, for linear groups, we are concerned by groups described
by generating tuples given as matrices.}. When the groups in $\mathcal{C}$ are uniformly recursively presentable
(as are linear groups), this is equivalent to the existence of an
algorithm which, given as input a group in $\mathcal{C}$, produces
a list of its finite quotients. 

Here we prove Theorem \ref{thm:no uniform CFQ linear}: 
\begin{thm*}
[Theorem \ref{thm:no uniform CFQ linear}]\label{thm:no uniform CFQ linear-1}Finitely
generated linear groups do not have uniformly computable marked finite
quotients.
\end{thm*}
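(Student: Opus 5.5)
The plan is to reduce an undecidable property of finite presentations to the question of whether a given map from the generators of a Mihailova-type subgroup of $\mathbb{F}_k\times\mathbb{F}_k\le\mathrm{SL}_4(\mathbb{Z})$ to a fixed finite group extends to a homomorphism. The groups are the subgroups $M_\pi$ of Section \ref{sec:Undecidable-square-root}, or more generally the fibre products of \cite{Bridson2011}. The point is that $M_\pi$ is always finitely generated by an explicit tuple of matrices, independently of $\pi$, while its finite quotients encode data about $H=\langle a_1,\dots,a_k\mid r_1,\dots,r_n\rangle$ that is not uniformly computable. Since $\pi$ varies, this obstructs only a uniform algorithm, which is consistent with Problem 2 remaining open.

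First I will analyse the structure of $M_\pi$. Write $N=\langle\langle r_1,\dots,r_n\rangle\rangle\le\mathbb{F}_k$ and $\Delta=\{(u,u)\}$. Then $M_\pi=(1\times N)\rtimes\Delta$. Consider the map $f$ that sends every diagonal generator $(a_i,a_i)$ to the identity and sends $(1,r_j)$ to a prescribed element $c_j$ of a finite abelian group, say $\mathbb{Z}/m$. The key identity is
\[
(1,n)(a,a)(1,n)^{-1}(a,a)^{-1}=(1,[n,a]).
\]
It gives $M_\pi/\langle\langle\Delta\rangle\rangle\cong N/[N,\mathbb{F}_k]$, and this abelian group is generated by the images of the $r_j$. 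Hence $f$ extends to a homomorphism if and only if the assignment $r_j\mapsto c_j$ extends to a homomorphism $N/[N,\mathbb{F}_k]\to\mathbb{Z}/m$.

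Next I will use Hopf's formula. There is an exact sequence
\[
0\to H_2(H)=\frac{N\cap[\mathbb{F}_k,\mathbb{F}_k]}{[N,\mathbb{F}_k]}\to N/[N,\mathbb{F}_k]\to \mathbb{Z}^k,
\]
where the last map is abelianisation. Deciding whether the prescribed map on the $r_j$ extends therefore amounts to understanding the relations among the $r_j$ modulo $[N,\mathbb{F}_k]$. Those relations are exactly the relations of $H_1(\mathbb{F}_k)$ among the $r_j$, twisted by the Schur multiplier. I then want a recursive family of finite presentations $\pi_s$ for which a fixed question of the form ``does this fixed map $N/[N,\mathbb{F}_k]\to\mathbb{Z}/2$ exist'' is undecidable. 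I would obtain it by arranging $N\subseteq[\mathbb{F}_k,\mathbb{F}_k]$, for example by using only commutator relators or by a standard doubling trick, so that $N/[N,\mathbb{F}_k]=H_2(H)$. The question then becomes whether a given element of $H_2(H)$, namely the class of some product of the $r_j$, survives in $H_2(H)\otimes\mathbb{Z}/2$. Undecidability of properties of $H_2$ of finitely presented groups (Gordon-type results) would finish the argument.

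The main obstacle is this last step: producing a uniform family where the relevant property of $N/[N,\mathbb{F}_k]$ is genuinely undecidable and is detected by a finite quotient. If the $H_2$ route proves awkward, I would instead follow \cite{Bridson2011} more closely. There I would take the fibre product $P\le\mathbb{F}\times\mathbb{F}$ of $\mathbb{F}\to Q$, where $Q$ ranges over a recursive family of finitely presented groups for which triviality of the profinite completion is undecidable, as in Bridson--Wilton. The task would then be to check that a fixed finite target (e.g.\ $\mathbb{Z}/2$ or a fixed finite simple group) with a fixed marking of the generators of $P$ is a quotient of $P$ exactly when $\widehat{Q}\neq 1$. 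Since finite quotients of $\mathbb{F}\times\mathbb{F}$ restricted to $P$ are always available, the delicate point is to choose the marking so that it cannot factor through $\mathbb{F}\times\mathbb{F}$ unless $Q$ has a nontrivial finite quotient.
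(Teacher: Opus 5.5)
Your reduction is correct. The map $(u,v)\mapsto u^{-1}v$ induces $M_\pi/\langle\langle\Delta\rangle\rangle\cong N/[N,\mathbb{F}_k]$ with $(1,r_j)\mapsto r_j$. So the map $f_c$ sending $(a_i,a_i)\mapsto 0$ and $(1,r_j)\mapsto c_j\in\mathbb{Z}/m$ extends to $M_\pi$ exactly when $c$ vanishes on $K_\pi=\{e\in\mathbb{Z}^n : \prod_j r_j^{e_j}\in[N,\mathbb{F}_k]\}$. Hopf's formula then gives $H_2(H)\cong\ker\partial_2/K_\pi$, where $\partial_2\colon\mathbb{Z}^n\to\mathbb{Z}^k$ is the exponent-sum matrix of $\pi$.

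However, the proposal stops exactly where the theorem's content lies, and you flag this yourself as the main obstacle. You never exhibit a computable family $(\pi_s,c_s)$ for which ``$c_s$ vanishes on $K_{\pi_s}$'' is undecidable. ``Gordon-type results'' is not a step. The device $N\subseteq[\mathbb{F}_k,\mathbb{F}_k]$ (commutator relators, an unspecified doubling trick) is neither justified nor needed. The fallback is only a wish: by your own computation, marked finite quotients of the fibre product $P$ see $N/[N,\mathbb{F}]$, hence $H_2(Q)$, and not only $\widehat{Q}$. Without controlling $H_2(Q)$ there is no reason a fixed marked target should detect exactly $\widehat{Q}\neq 1$. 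As written, this is not a proof.

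The gap closes within your framework. $H_2(H)$ is a direct summand of $N/[N,\mathbb{F}_k]$, since the quotient is the free abelian group $\operatorname{im}\partial_2$. Hence $H_2(H)\neq 0$ if and only if there are a prime $p$ and $c\in(\mathbb{Z}/p)^n$ such that $f_c$ extends to $M_\pi$ while $c$ is nonzero on a basis of $\ker\partial_2$. With the hypothetical uniform algorithm this is semi-decidable: search over $p$ and $c$ and query the algorithm. Conversely, $H_2(H)=0$, i.e. $K_\pi=\ker\partial_2$, is always semi-decidable, because $K_\pi$ is recursively enumerable and $\ker\partial_2$ is a computable finitely generated group. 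The algorithm would therefore decide vanishing of $H_2$ of finite presentations. This fails for the Collins--Miller sequence \cite{collins_miller_1999} used in the proof of Proposition \ref{prop:additional prop}, where $H_2(Q_s)=0$ if and only if $Q_s=1$.

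Completed this way, your route genuinely differs from the paper's. The paper uses Corollary D of \cite{Bridson2011}, which gives finitely presentable $\Lambda_n$ with non-computable $b_1$. It also needs the torsion control of Proposition \ref{prop:additional prop}, so that $(\mathbb{Z}/p)^{r_n+1}$ is a quotient exactly when $b_1(\Lambda_n)>r_n$. That buys failure of uniformity even among finitely presentable linear groups. Your argument avoids the Rips construction, Haglund--Wise and the 1-2-3 theorem, and works inside a fixed $\mathrm{SL}_4(\mathbb{Z})$ with abelian targets $\mathbb{Z}/p$. The price is that the Mihailova groups are not finitely presentable.
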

The proof is a corollary of a construction of Bridson and Wilton \cite{Bridson2011}.
\begin{thm}
[\cite{Bridson2011}, Corollary D]\label{thm:,-Corollary-D}There
exists a computable sequence of triples $(m_{n},r_{n},S_{n})$ with
$m_{n},r_{n}\in\mathbb{N}$ and $S_{n}\subseteq\mathrm{SL}_{m_{n}}(\mathbb{Z})$
a finite set such that: 
\begin{enumerate}
\item all of the groups $\Lambda_{n}:=\langle S_{n}\rangle\subseteq\mathrm{SL}_{m_{n}}(\mathbb{Z})$
are finitely presentable;
\item the set of integers $\{n\in\mathbb{N}\mid b_{1}(\Lambda_{n})=r_{n}\}$
is computably enumerable but not computable. 
\end{enumerate}
In particular, there is no algorithm that takes as input a finite
set of integer matrices that generate a finitely presentable group
and outputs a presentation for that group.
\end{thm}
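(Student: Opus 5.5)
This theorem is quoted from \cite{Bridson2011}, so the plan below reconstructs the Bridson--Wilton argument rather than giving anything new. The approach is to manufacture a computable family of fibre products inside products of linear hyperbolic groups. Each member is finitely presented and linear over $\mathbb{Z}$, but its first Betti number encodes an undecidable triviality problem.

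\textbf{Step 1 (the source of undecidability).} Start from a computable sequence of finite presentations $\mathcal{P}_n$ of groups $Q_n$. By Adian--Rabin, the set $\{n : Q_n=1\}$ is c.e.\ but not computable. I would modify the sequence so that every $Q_n$ is of type $F_3$, for instance by embedding each $Q_n$ uniformly into a group with aspherical-type finiteness properties. I would also arrange that $Q_n$ is perfect whenever it is non-trivial, so that its abelianisation tells us nothing.

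\textbf{Step 2 (Rips construction and fibre product).} Apply an effective version of the Rips construction, in the Haglund--Wise form, to $\mathcal{P}_n$. This produces, computably in $n$, a short exact sequence $1\to N_n\to\Gamma_n\to Q_n\to 1$. Here $\Gamma_n$ is torsion-free hyperbolic and virtually compact special, and $N_n$ is finitely generated. Set $\Lambda_n:=\{(g,h)\in\Gamma_n\times\Gamma_n : g,h \text{ have the same image in } Q_n\}$. The $1$-$2$-$3$ theorem of Baumslag--Bridson--Miller--Short, using that $Q_n$ is of type $F_3$ and $N_n$ is finitely generated, gives that $\Lambda_n$ is finitely presentable. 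A finite generating set for $\Lambda_n$ is computable from the data: take the diagonal copies of the generators together with $N_n\times 1$.

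\textbf{Step 3 (linearity over $\mathbb{Z}$).} Compact special groups embed in right-angled Artin groups, which embed in $\mathrm{SL}_m(\mathbb{Z})$. After computably passing to a finite-index special subgroup, or inducing the representation up, I obtain an explicit faithful representation $\Gamma_n\times\Gamma_n\hookrightarrow\mathrm{SL}_{m_n}(\mathbb{Z})$. This yields $S_n$ and $m_n$. Some care is needed to keep the passage to finite index compatible with the fibre product construction.

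\textbf{Step 4 (Betti number computation).} I would compute $H_1(\Lambda_n;\mathbb{Q})$ from the extension $1\to N_n\times N_n\to\Lambda_n\to\Gamma_n\to 1$, where the quotient map is given by the diagonal. Define $r_n:=2b_1(\Gamma_n)$, which is computable from the presentation. If $Q_n=1$, then $\Lambda_n=\Gamma_n\times\Gamma_n$, so $b_1=r_n$. If $Q_n\neq1$, the $Q_n$-coinvariants of $H_1(N_n)$ should make $b_1(\Lambda_n)$ strictly smaller. This is where perfectness and infiniteness of $Q_n$ (no finite quotients helps) must be used, probably via the Rips construction's control over $H_1(N_n)$.

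Semi-decidability of $\{n: b_1(\Lambda_n)=r_n\}$ then follows because when $Q_n=1$ one eventually finds a proof of this and then computes $b_1$ exactly. Non-computability of the set is inherited from Step 1.

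The last assertion of the theorem also follows. A presentation-finding algorithm would let us compute $b_1(\Lambda_n)$ from $S_n$, and hence decide membership in the set.

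\textbf{Main obstacle.} I expect the hardest steps to be Step 4, forcing a genuine drop in $b_1$, together with the uniform $F_3$ requirement in Step 1. My first attempt would be to choose $Q_n$ as a free product or amalgam with a fixed perfect $F_\infty$ group, such as Thompson-type or Higman groups, killed exactly when $\mathcal{P}_n$ is trivial.
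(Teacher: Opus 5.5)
\textbf{Verdict: there is a genuine gap, in Step 4.} You have the direction of the Betti-number change backwards, and with perfect $Q_n$ the drop you want is impossible. Since $\Lambda_n\cong N_n\rtimes\Gamma_n$, one has $H_1(\Lambda_n)\cong H_1(\Gamma_n)\oplus H_1(N_n)_{\Gamma_n}$. The five-term exact sequence $H_2(\Gamma_n)\to H_2(Q_n)\to H_1(N_n)_{\Gamma_n}\to H_1(\Gamma_n)\to H_1(Q_n)\to 0$ shows that $H_1(N_n)_{\Gamma_n}\to H_1(\Gamma_n)$ is onto when $Q_n$ is perfect. Hence
$b_1(\Lambda_n)=2b_1(\Gamma_n)+\dim\mathrm{coker}\bigl(H_2(\Gamma_n;\mathbb{Q})\to H_2(Q_n;\mathbb{Q})\bigr)\ge r_n .$
So perfectness rules out any drop. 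Nothing in your hypotheses (perfect, infinite, no finite quotients) forces a rise either. If $H_2(Q_n;\mathbb{Q})=0$, or if the Rips group happens to surject onto it rationally, you get $b_1(\Lambda_n)=r_n$ even though $Q_n\neq 1$. Then $\{n : b_1(\Lambda_n)=r_n\}$ need not equal $\{n : Q_n=1\}$. The invariant that matters is $H_2(Q_n)$, not the abelianisation or the finite quotients.

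\textbf{The missing idea} is the one recorded in the paper's sketch of Bridson--Wilton: make $b_1$ jump \emph{up}.
\begin{itemize}
\item Start from the Collins--Miller sequence. There each nontrivial $Q_n$ is perfect, aspherical and has infinite $H_2(Q_n)$. Asphericity gives type $F$, which settles the finiteness requirement of your Step 1 directly. An embedding of $Q_n$ into a larger group, as you suggest, would not preserve the trivial/nontrivial dichotomy anyway.
\item Compute a presentation of the universal central extension $\tilde Q_n$, and apply the Haglund--Wise Rips construction to $\tilde Q_n$ rather than to $Q_n$.
\item Since $H_2(\tilde Q_n)=0$, the composite $\Gamma_n\to\tilde Q_n\to Q_n$ induces the zero map on $H_2$. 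For the fibre product of this composite, the five-term sequence then gives $b_1(\Lambda_n)=2b_1(\Gamma_n)+b_2(Q_n)$, which exceeds $r_n$ exactly when $Q_n\neq 1$.
\item The kernel of the composite is an extension of the finitely generated Rips kernel by $H_2(Q_n)$. It is therefore still finitely generated, and the 1-2-3 Theorem applies.
\end{itemize}
Note that the fibre product must be taken over $Q_n$. Over $\tilde Q_n$ itself, superperfectness makes the same computation give $b_1=2b_1(\Gamma_n)$ on the nose.

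This upward jump is also what the paper relies on in the proof of Theorem \ref{thm:no uniform CFQ linear}, where it tests for the quotient $(\mathbb{Z}/p)^{r_n+1}$. Once the dichotomy $b_1(\Lambda_n)=r_n\iff Q_n=1$ is in place, the rest of your plan goes through. That covers your Steps 2 and 3 (inducing the representation up to $\Gamma_n$ and using block-diagonal matrices removes your compatibility worry), the c.e.\ argument, and the final ``in particular''.
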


With the notation of the above theorem, no algorithm can, on input
$n$, produce a finite presentation of the abelianization of $\Lambda_{n}$.

To establish Theorem \ref{thm:no uniform CFQ linear}, we will need
an additional fact that follows from the proof of Theorem \ref{thm:,-Corollary-D}
given in \cite{Bridson2011}: we can control the torsion part of the
abelianization of $\Lambda_{n}$.

\begin{prop}
\label{prop:additional prop}Additionally to what was stated in Theorem
\ref{thm:,-Corollary-D}, we can assume that there is a computable
sequence $(\pi_{n})$ of finite presentations of abelian groups such
that in case $b_{1}(\Lambda_{n})=r_{n}$, the abelianization of $\Lambda_{n}$
is $\pi_{n}$. 
\end{prop}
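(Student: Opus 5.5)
We will go back into the construction in \cite{Bridson2011} proving Corollary D and track the abelianization of $\Lambda_{n}$, not just its rank. The construction there goes as follows, which we recall only at the level of detail needed. One starts from a computable family of finite presentations $P_{n}$ of groups $Q_{n}$, obtained from a group with unsolvable word problem (or from an enumeration of halting computations). In this family, $Q_{n}$ is either trivial or has some controlled property, according to whether $n$ lies in a c.e.\ non-computable set. One then applies the Rips-type/Haglund--Wise construction and the ``1-2-3 Theorem'' to obtain short exact sequences $1\to N_{n}\to\Gamma_{n}\to Q_{n}\to1$ with $\Gamma_{n}$ hyperbolic, virtually special and hence linear over $\mathbb{Z}$. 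The group $\Lambda_{n}$ is then defined as the fibre product $P_{n}\subseteq\Gamma_{n}\times\Gamma_{n}$, or a variant obtained after passing to a finite-index special subgroup.

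The plan has three steps. First, compute the abelianization of the fibre product in terms of the data. For a fibre product $P=\{(g,h):p(g)=p(h)\}$ of $p:\Gamma\to Q$, generated by the diagonal and $N\times1$, we have
\[
H_{1}(P)\cong H_{1}(\Gamma)\oplus\bigl(N/[N,\Gamma]\bigr)
\]
modulo identifications that depend only on $Q$. In the degenerate case $Q_{n}=1$, the fibre product is simply $\Gamma_{n}\times\Gamma_{n}$, so $H_{1}(\Lambda_{n})=H_{1}(\Gamma_{n})^{2}$. Second, observe that in the case $b_{1}(\Lambda_{n})=r_{n}$ (which in Bridson--Wilton corresponds to the c.e.\ ``good'' case, where the relevant $Q_{n}$ is trivial, or is the fixed perfect group with the known homology), the whole situation is determined by the explicit presentation of $\Gamma_{n}$. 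That presentation is computable from $n$, since the Rips/Wise step is effective. From a finite presentation of $\Gamma_{n}$ one computes a finite presentation of $H_{1}(\Gamma_{n})$ by Smith normal form. Third, define $\pi_{n}$ to be the presentation of the abelian group predicted in the good case, namely $H_{1}(\Gamma_{n})^{2}$ or the corresponding formula. This sequence is computable whether or not $n$ is actually good. Hence in case $b_{1}(\Lambda_{n})=r_{n}$ the abelianization is $\pi_{n}$, which is exactly what is required.

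The main obstacle is the first step. We must make sure that in the good case the torsion of $H_{1}(\Lambda_{n})$ really is given by a formula computable from the input, and does not depend on undecidable information such as the order of $H_{2}(Q_{n})$ or hidden torsion in $N/[N,\Gamma]$. To handle this, we would first try to arrange, by modifying the construction if needed, that in the good case $Q_{n}$ is trivial, so that $\Lambda_{n}$ is a direct product, or a finite-index subgroup computable from $\Gamma_{n}$. If instead the proof passes to a finite-index subgroup, we would use Reidemeister--Schreier, which is effective, to compute the presentation of that subgroup in the good case.
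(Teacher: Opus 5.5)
Your proposal is correct and follows the paper's argument. In the Bridson--Wilton construction the good case $b_{1}(\Lambda_{n})=r_{n}$ is exactly the case where the Collins--Miller group $Q_{n}$ is trivial, and then so is its universal central extension $\tilde{Q}_{n}$. Hence $\Lambda_{n}=\Gamma_{n}\times\Gamma_{n}$, and one takes $\pi_{n}$ to be the abelianization, computed by Smith normal form, of the uniformly computable presentation of $\Gamma_{n}\times\Gamma_{n}$.

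Your hedges are not needed: modifying the construction, passing to finite-index subgroups, a ``fixed perfect group''. The general fibre-product formula you mention is also not needed, and it is in fact exact with no identifications, since $P\cong N\rtimes\Gamma$ gives $H_{1}(P)\cong H_{1}(\Gamma)\oplus N/[N,\Gamma]$.
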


\begin{proof}
The construction of the sequence $(m_{n},r_{n},S_{n})$ given by Theorem
\ref{thm:,-Corollary-D} relies on the following steps:
\begin{itemize}
\item Following \cite{collins_miller_1999}, start with a computable sequence
of finite presentations defining a sequence $(Q_{n})$ of groups such
that the $Q_{n}$ is either: 
\begin{itemize}
\item trivial, or:
\item perfect, aspherical and with infinite second homology group,
\end{itemize}
and such that the set 
\[
\{n\mid Q_{n}=\{1\}\}
\]
is computably enumerable but not computable.
\item One replaces the sequence $(Q_{n})$ by the sequence $(\tilde{Q}_{n})$,
where $\tilde{Q}_{n}$ is the universal central extension of $Q_{n}$
whose presentation is obtained using \cite[Proposition 3.4]{Bridson2011}.
(Note that the trivial group is perfect, that it is its own universal
central extension, and \cite[Proposition 3.4]{Bridson2011} indeed
yields a presentation of the trivial group if the starting group was
trivial.)
\item One then applies a version of the Rips construction due to Haglund
and Wise \cite{Haglund2008} to this sequence, taking, for each group
$\tilde{Q}_{n}$, the fiber product of a short exact sequence
\[
1\rightarrow K_{n}\rightarrow\Gamma_{n}\overset{p_{n}}{\rightarrow}\tilde{Q}_{n}\rightarrow1
\]
where $\Gamma_{n}$ is virtually special, $\text{CAT}(0)$ and hyperbolic,
and $K_{n}$ is a finitely generated subgroup of $\Gamma_{n}$.
\item The group $\Lambda_{n}$ is the fiber product over this short exact
sequence, i.e. $\Lambda_{n}\le\Gamma_{n}\times\Gamma_{n}$ is given
by 
\[
(u,v)\in\Lambda_{n}\iff p_{n}(u)=p_{n}(v).
\]
\item The 1-2-3 Theorem of Baumslag, Bridson, Miller and Short \cite{Baumslag2000}
then guarantees that $\Lambda_{n}$ is finitely presentable. 
\end{itemize}
In case $Q_{n}$ is trivial, $\tilde{Q}_{n}$ is also trivial, and
thus the fiber product over $\tilde{Q}_{n}$ is simply the full direct
product $\Lambda_{n}=\Gamma_{n}\times\Gamma_{n}$. 

One then take $\pi_{n}$ to be the abelianization of a presentation
of $\Gamma_{n}\times\Gamma_{n}$. 

By construction, when $b_{1}(\Lambda_{n})=r_{n}$, which is by \cite{Bridson2011}
when $Q_{n}$ is the trivial group, $\pi_{n}$ gives a presentation
of the abelianization of $\Lambda_{n}$.
\end{proof}
We can now prove Theorem \ref{thm:no uniform CFQ linear}.
\begin{proof}
[Proof of Theorem \ref{thm:no uniform CFQ linear}]Let $(m_{n},r_{n},S_{n},\pi_{n})$
be a sequence as given by Theorem \ref{thm:,-Corollary-D} together
with Proposition \ref{prop:additional prop}. 

Suppose that the groups $\Lambda_{n}:=\langle S_{n}\rangle$ have
uniformly computable finite quotients. 

We describe an algorithm which, on input $n$, decides whether $b_{1}(\Lambda_{n})=r_{n}$. 

The presentation $\pi_{n}$ defines a group $A_{n}$, which is the
abelianization of $\Lambda_{n}$ provided that $b_{1}(\Lambda_{n})=r_{n}$. 

By construction \cite{Bridson2011}, if $b_{1}(\Lambda_{n})\ne r_{n}$,
then $b_{1}(\Lambda_{n})>r_{n}$, and it is easy to find a finitely
generated abelian group which is not a quotient of $A_{n}$ but which
is a quotient of any abelian group of higher rank: this is true of
$B_{n}=(\mathbb{Z}/p\mathbb{Z})^{r_{n}+1}$, for $p$ a prime bigger
than the cardinality of the torsion part of $\Lambda_{n}$.

To prove that $b_{1}(\Lambda_{n})\ne r_{n}$, it thus suffices to
prove that $B_{n}$ is a quotient of $\Lambda_{n}$. 

Note that $B_{n}$ is finite, and thus it admits finitely many markings,
and thus from the hypothesis that the groups $\Lambda_{n}$ have uniformly
computable marked finite quotients, we can decide whether $B_{n}$
is a quotient of $\Lambda_{n}$, and thus whether $b_{1}(\Lambda_{n})=r_{n}$.

This is the desired contradiction.
\end{proof}

\section{Detecting linearity of subgroups of a finitely presented group with
solvable word problem}

Here we prove Proposition \ref{prop:fp wp linear subgroups}:
\begin{prop*}
[Proposition \ref{prop:fp wp linear subgroups}] There exists a
finitely presented group $G$ with solvable word problem such that
no algorithm can, on input a tuple of elements of $G$, stop if and
only if the group they generate is linear, nor can it stop if and
only if the group they generate is not linear.
\end{prop*}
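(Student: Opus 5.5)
The plan is to derive the statement as a direct consequence of the results of \cite{Rauzy2021}. Those results concern the decidability of properties of subgroups of a finitely presented group with solvable word problem. The key mechanism there is a Rice-type theorem for finitely generated subgroups. One fixes a finitely presented group $G$ with solvable word problem which \emph{uniformly} contains enough finitely generated subgroups. Then, for any property $\mathcal{P}$ of groups, one shows that semi-deciding $\mathcal{P}$ on tuples of $G$ (stopping exactly on tuples generating a group with $\mathcal{P}$) is impossible as soon as there is a group $A$ with $\mathcal{P}$ and a group $B$ without $\mathcal{P}$ satisfying suitable "limit" conditions. The non-semi-decidability of the complement is handled the same way with the roles of $A$ and $B$ exchanged.

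Concretely, I would proceed in the following steps.

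\textbf{Step 1.} Recall the construction from \cite{Rauzy2021}. The main tool is a computable sequence of tuples $T_n$ in a fixed finitely presented group $G$ with solvable word problem, obtained via a Higman-type embedding with solvable word problem (e.g.\ \cite{OuldHoucine2007}). The subgroups $\langle T_n\rangle$ are isomorphic to one of two prescribed groups $A$ or $B$, according to whether the $n$-th Turing machine halts, so that the set of $n$ with $\langle T_n\rangle\cong A$ is c.e.\ but not computable. For this to work, one needs a marked group $B$ and a sequence of marked groups converging to it in the space of marked groups, with the halting indices producing $A$.

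\textbf{Step 2.} Choose these witnesses. To defeat a semi-algorithm for non-linearity, take the "generic" (non-halting) case to yield a non-linear group and the halting case to yield a linear group. Conversely, to defeat a semi-algorithm for linearity, swap the roles. For instance, use a free group $\mathbb{F}_2$ (linear) on one side and a finitely generated non-linear group with solvable word problem on the other, such as a non-residually-finite one like $BS(2,3)$. Simple constructions such as free products with finite quotients perturbed by halting times suffice.

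\textbf{Step 3.} Apply the theorem of \cite{Rauzy2021} twice, once for each direction. If one $G$ is produced for each direction, take the free product of the two groups as the final $G$. A free product of finitely presented groups with solvable word problem is again finitely presented with solvable word problem, and both undecidability statements persist, since tuples lying in one factor generate the same subgroup.

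The main obstacle is Step 2: checking that linearity and non-linearity each satisfy the exact hypotheses of the relevant result in \cite{Rauzy2021}. Those hypotheses are typically phrased in terms of the space of marked groups, namely the existence of a group with the property that is a limit of groups without it, with recursive presentations. The natural first attempt is to use that $\mathbb{F}_2$ is a limit of non-linear groups; for example, $\mathbb{F}_2 * BS(2,3)$-type quotients with long relators converge to $\mathbb{F}_2$. Also, non-linear groups like $BS(2,3)$ are limits of linear groups obtained by adding long relators, e.g.\ of its finite-index-free approximations. Once these witnesses are exhibited, the rest is a direct citation.
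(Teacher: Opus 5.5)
\textbf{Gap in Step 2: your witness for a non-linear limit of linear groups cannot work.} You need such a limit to defeat a semi-algorithm for non-linearity, and $BS(2,3)=\langle b,t\mid t^{-1}b^{2}t=b^{3}\rangle$ is not one.

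\emph{Why $BS(2,3)$ fails.} It is finitely presented. So any sequence of marked groups converging to it consists, from some point on, of marked quotients of $BS(2,3)$. If these quotients are linear, they are residually finite (Malcev), and each of their finite quotients is a finite quotient of $BS(2,3)$. Now consider the element $[t^{-1}bt,b]$.
\begin{itemize}
\item It is trivial in every finite quotient of $BS(2,3)$. There $b$ has order prime to $6$, so $b\in\langle b^{2}\rangle$ and $t^{-1}bt\in\langle b^{3}\rangle$.
\item It is non-trivial in $BS(2,3)$ by Britton's lemma.
\end{itemize}
Hence it is trivial in these linear quotients, contradicting convergence.

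More generally, a finitely presented limit of residually finite groups is residually finite. So no finitely presented non-residually-finite group can serve here, and ``adding long relators'' only produces further quotients, which is worse. You need a non-linear group whose non-linearity is invisible in every finite ball, so it is necessarily infinitely presented. The paper takes the LEF, non-residually-finite group $\langle n\mapsto n+1,(1\,2)\rangle\le\mathrm{Sym}(\mathbb{Z})$. It is an effective limit of finite groups (e.g.\ $S_m$ marked by an $m$-cycle and a transposition) with uniformly solvable word problem.

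\textbf{Smaller points.}
\begin{itemize}
\item The hypothesis of Theorem \ref{thm:Thm comp analysis gps} is stronger than recursive presentability. You need a sequence with \emph{uniformly solvable word problem}, converging \emph{effectively} to some $H\notin\{G_n\}$. Also, the input tuples define groups among $\{G_n\}\cup\{H\}$, not two fixed groups $A,B$.
\item For the other direction, ``$\mathbb{F}_2*BS(2,3)$-type quotients'' is too vague; that group is not even $2$-generated. It can be made precise. The group $\langle a,b\mid a^{-n}b^{2}a^{n}=b^{3}\rangle\cong BS(2,3)*_{t=a^{n}}\langle a\rangle$ is non-residually-finite. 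It is one-relator, so it has uniformly solvable word problem. By the normal form theorem for amalgams, no non-empty reduced word of length $<n$ is trivial in it.
\item The paper instead uses free Burnside groups $\mathbb{F}_k/\mathbb{F}_k^{p}$ with $p\to\infty$, converging to $\mathbb{F}_k$.
\item Your Step 3 (taking the free product of the two groups) is fine.
\end{itemize}
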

This proposition relies on the following result of \cite{Rauzy2021}:
\begin{thm}
[\cite{Rauzy2021}, Theorem 9.4] \label{thm:Thm comp analysis gps}Suppose
that a $\Lambda_{WP}$-computable sequence $(G_{n})_{n\in\mathbb{N}}$
of $k$-marked groups effectively converges to a $k$-marked group
$H$, and suppose that $H\notin\{G_{n},n\in\mathbb{N}\}$. Then there
exists a finitely presented group $\Gamma$, with solvable word problem,
in which no algorithm can, given a tuple of elements of $\Gamma$
that defines a marked group of \textup{$\{G_{n},n\in\mathbb{N}\}\cup\{H\}$},
stop if and only if this tuple defines $H$. 
\end{thm}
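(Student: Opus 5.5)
The plan is a reduction from the halting problem. I will use effective convergence to build a uniform family of marked groups $K_m$, each equal either to some $G_n$ or to $H$, and then embed all of them into one finitely presented group with solvable word problem. Fix a computably enumerable, non-computable set $X\subseteq\mathbb{N}$ (for instance the halting set), with a computable enumeration. For each $m$, define the $k$-marked group
\[
K_m=\begin{cases} G_{t} & \text{if } m \text{ enters } X \text{ at stage } t,\\ H & \text{if } m\notin X.\end{cases}
\]
Since $H\notin\{G_n\}$, we have $K_m=H$ if and only if $m\notin X$.

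\textbf{Step 1: uniform word problem for the $K_m$.} I will show the $K_m$ have uniformly solvable word problem. Effective convergence provides a computable function $R\mapsto N(R)$ such that for all $n\ge N(R)$, the balls of radius $R$ in $G_n$ and in $H$ coincide (as marked balls). Given $m$ and a word $w$ of length at most $R$, the algorithm is:
\begin{enumerate}
\item Compute $N=N(R)$.
\item Run the enumeration of $X$ for $N$ steps.
\item If $m$ has appeared at some stage $t\le N$, then $K_m=G_t$; decide $w=1$ using the uniform word problem algorithm for the $G_n$, which is given by $\Lambda_{WP}$-computability.
\item Otherwise, $K_m$ is either $H$ or $G_t$ for some $t>N$. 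In both cases its $R$-ball agrees with that of $G_N$, so decide $w=1$ in $G_N$.
\end{enumerate}
The same observation shows in passing that $H$ itself has solvable word problem.

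\textbf{Step 2: embedding into a single group.} I will realize all the $K_m$ inside one finitely presented group $\Gamma$ with solvable word problem, so that a computable tuple $T_m$ of elements of $\Gamma$ generates a copy of $K_m$ with its marking. I would proceed in three moves:
\begin{enumerate}
\item Take the countable free product $P=\ast_m K_m$. It has solvable word problem because the factors have uniformly solvable word problem, and normal forms in a free product are computable.
\item Embed $P$ into a $2$-generated group $Q$ by the Higman--Neumann--Neumann HNN construction. The images of the generators of each $K_m$ are words computable from $m$, and the construction preserves solvability of the word problem.
\item Apply Clapham's theorem (the solvable-word-problem version of Higman's embedding theorem) to embed $Q$ into a finitely presented group $\Gamma$ with solvable word problem.
\end{enumerate}
Each of these embeddings is injective on the factor $K_m$. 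Hence the computable tuple $T_m$, namely the image of the marking of $K_m$, defines exactly the marked group $K_m\in\{G_n\}\cup\{H\}$.

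\textbf{Step 3: conclusion.} Suppose an algorithm stopped on exactly those tuples of $\Gamma$ (defining groups of $\{G_n\}\cup\{H\}$) that define $H$. Running it on $T_m$ would then halt if and only if $m\notin X$. This would make the complement of $X$ computably enumerable, and hence $X$ computable, a contradiction.

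The main obstacle I expect is Step 2. One has to check that every embedding step is effective and preserves decidability of the word problem, and in particular that the HNN embedding of the countably generated group $P$ into a finitely generated group uses computable words for the generators of $P$. My approach would be to invoke the standard effective versions of these theorems as black boxes rather than rebuild them.
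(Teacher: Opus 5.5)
The paper gives no proof of this statement; it quotes it from \cite{Rauzy2021}, so there is no in-paper argument to compare against. Your proof is correct and follows the standard route, which is the natural one for this result.

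You use effective convergence to hide a c.e. non-computable set inside a family $K_m\in\{G_n\}\cup\{H\}$. This family has uniformly decidable word problem, and $K_m=H$ exactly when $m\notin X$. You then realize all the $K_m$ as computable tuples in one finitely presented group $\Gamma$ with solvable word problem, via the free product, the Higman--Neumann--Neumann HNN embedding and Clapham's theorem.

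The only step you leave as a black box is the HNN step. The fact it needs is standard: the associated free subgroups have decidable membership, which makes the HNN step preserve solvability of the word problem, and the images of the generators are computable words.
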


Here, a sequence of marked groups is called $\Lambda_{WP}$\emph{-computable}
if the word problem can be solved uniformly in the sequence: there
is an algorithm which, on input $n$ and an element $w$ of the free
group $\mathbb{F}_{k}$, decides if $w=_{G_{n}}1$. 

This sequence \emph{effectively converges to} $H$ if there is an
algorithm which, on input $n$, produces $M$ such that for all $m\ge M$,
the ball of radius $n$ of the group $H$ coincides with that of $G_{m}$. 

Note that the set of finitely generated linear groups is $\Sigma_{2}^{0}$-complete
in the space of marked groups, in particular it is neither closed
nor open, below we use the fact that it is in some sense ``effectively
not closed'' and ``effectively not open''. 
\begin{proof}
[Proof of Proposition \ref{prop:fp wp linear subgroups}] It suffices
to use Theorem \ref{thm:Thm comp analysis gps} with both a $\Lambda_{WP}$\emph{-}computable
sequence of linear groups that converges to a non-linear group, and
a $\Lambda_{WP}$\emph{-}computable sequence of non-linear groups
that converges to a linear group. 

For instance, one can use: 
\begin{itemize}
\item A sequence of finite groups converging to a non-residually finite
group (thus non-linear). For instance, one could take the group of
permutations of $\mathbb{Z}$ generated by $n\mapsto n+1$ and $(1,2)$,
which is known to be LEF but not residually finite \cite{VershikGordon1998}
(and it is easy to have a computable sequence that effectively converges
witness for this). 
\item The free group $\mathbb{F}_{k}$ is the limit of the sequence of free
Burnside groups $\mathbb{F}_{k}/\mathbb{F}_{k}^{p}$ when the exponent
$p$ goes to infinity. Burnside groups are not linear. The sequence
is computable, since the Adian-Novikov construction solves the word
problem in these groups \cite{adian1979burnside}, and it effectively
converges. \qedhere
\end{itemize}
\end{proof}
\bibliographystyle{plain}
\bibliography{C:/Users/Emmanuel/Desktop/Articles/theonebib/TheOneBib}

\begin{thebibliography}{10}

\bibitem{adian1979burnside}
Sergei~I. Adian.
\newblock {\em The Burnside Problem and Identities in Groups}, volume~95 of
  {\em Ergebnisse der Mathematik und ihrer Grenzgebiete}.
\newblock Springer-Verlag, Berlin, Heidelberg, 1979.

\bibitem{Baumslag2000}
Gilbert Baumslag, Martin~R. Bridson, Charles~F. Miller, III, and Hamish Short.
\newblock Fibre products, non-positive curvature, and decision problems.
\newblock {\em Commentarii Mathematici Helvetici}, 75(3):457--477, 2000.

\bibitem{breuillard2024undecidabilityMatrix}
Emmanuel Breuillard and Georgi Kocharyan.
\newblock Undecidability of the stabilizer and zero-in-the-corner problems for
  matrix groups, 2024.

\bibitem{Bridson2011}
Martin~R. Bridson and Henry Wilton.
\newblock On the difficulty of presenting finitely presentable groups.
\newblock {\em Groups, Geometry, and Dynamics}, pages 301--325, 2011.

\bibitem{CASSAIGNE1999}
Julien Cassaigne, Tero Harju, and Juhani Karhum{\"a}ki.
\newblock On the undecidability of freeness of matrix semigroups.
\newblock {\em International Journal of Algebra and Computation},
  09(03n04):295--305, 1999.

\bibitem{collins_miller_1999}
Donald~J. Collins and Charles~F. Miller, III.
\newblock The word problem in groups of cohomological dimension 2.
\newblock In C.~M. Campbell, E.~F. Robertson, N.~Ruskuc, and G.~C. Smith,
  editors, {\em Groups St Andrews 1997 in Bath: Volume 1}, number 260 in London
  Mathematical Society Lecture Note Series, pages 211--218. Cambridge
  University Press, 1999.

\bibitem{Detinko_Eick_Flannery_2011}
A.~S. Detinko, B.~Eick, and D.~L. Flannery.
\newblock {\em Computing with matrix groups over infinite fields}, pages
  256--270.
\newblock London Mathematical Society Lecture Note Series. Cambridge University
  Press, 2011.

\bibitem{Detinko2019}
A.S. Detinko and D.L. Flannery.
\newblock Linear groups and computation.
\newblock {\em Expositiones Mathematicae}, 37(4):454--484, dec 2019.

\bibitem{DETINKO2013100}
A.S. Detinko, D.L. Flannery, and E.A. O'Brien.
\newblock Recognizing finite matrix groups over infinite fields.
\newblock {\em Journal of Symbolic Computation}, 50:100--109, 2013.

\bibitem{Haglund2008}
Fr{\'e}d{\'e}ric Haglund and Daniel~T. Wise.
\newblock Special cube complexes.
\newblock {\em Geometric and Functional Analysis}, 17(5):1551--1620, 2008.

\bibitem{Kharlampovich2017}
Olga Kharlampovich, Alexei Myasnikov, and Mark Sapir.
\newblock Algorithmically complex residually finite groups.
\newblock {\em Bulletin of Mathematical Sciences}, 7(2):309--352, mar 2017.

\bibitem{Harlampovic1982}
Olga~G. Kharlampovich.
\newblock A finitely presented solvable group with unsolvable word problem.
\newblock {\em Mathematics of the {USSR}-Izvestiya}, 19(1):151--169, feb 1982.

\bibitem{Malcev1940}
Anatolii~I. Malcev.
\newblock On isomorphic matrix representations of infinite groups.
\newblock {\em Rec. Math. [Mat. Sbornik] N.S.}, 8 (50)(3):405--422, 1940.

\bibitem{Malcev1956ResFin}
Anatolii~I. Malcev.
\newblock On homomorphisms onto finite groups.
\newblock {\em Uchenye Zapiski Ivanovskogo Gosudarstvennogo Pedagogicheskogo
  Instituta}, 18:49--60, 1956.

\bibitem{Malcev1983ResFin}
Anatolii~I. Malcev.
\newblock On homomorphisms onto finite groups.
\newblock {\em American Mathematical Society Translations: Series 2},
  119:67--79, 1983.
\newblock Translation of Uchen. Zap. Ivanov. Gos. Ped. Inst. 18 (1956), 49--60.

\bibitem{Mihailova1968Occurrence}
K.~A. Mihailova.
\newblock The occurrence problem for free products of groups.
\newblock {\em Mathematics of the {USSR}-Sbornik}, 4(2):181--190, feb 1968.

\bibitem{Miller1992}
Charles~F. {Miller III}.
\newblock Decision problems for groups {\textemdash} survey and reflections.
\newblock In {\em Mathematical Sciences Research Institute Publications}, pages
  1--59. Springer New York, 1992.

\bibitem{OuldHoucine2007}
Abderezak Ould~Houcine.
\newblock Embeddings in finitely presented groups which preserve the center.
\newblock {\em Journal of Algebra}, 307(1):1--23, January 2007.

\bibitem{Rabin1960}
Michael~O. Rabin.
\newblock Computable algebra, general theory and theory of computable fields.
\newblock {\em Transactions of the American Mathematical Society}, 95(2):341,
  may 1960.

\bibitem{Rauzy2020}
Emmanuel Rauzy.
\newblock Obstruction to a {H}igman embedding theorem for residually finite
  groups with solvable word problem.
\newblock {\em Journal of Group Theory}, 24(3):445--452, oct 2020.

\bibitem{Rauzy2021}
Emmanuel Rauzy.
\newblock Computable analysis on the space of marked groups.
\newblock {\em arXiv:2111.01179}, 2021.

\bibitem{Rauzy_2021}
Emmanuel Rauzy.
\newblock Computability of finite quotients of finitely generated groups.
\newblock {\em Journal of Group Theory}, 25(2):217--246, 2022.

\bibitem{Sapir2018Square}
Mark Sapir.
\newblock Answer to ``for which kinds of group $g$, can we identify a square
  element efficiently?''.
\newblock MathOverflow answer to question 300063.

\bibitem{VershikGordon1998}
Anatolii~M. Vershik and Evgenii~I. Gordon.
\newblock Groups that are locally embeddable in the class of finite groups.
\newblock {\em St. Petersburg Math.}, 1998.

\end{thebibliography}

\end{document}